\DeclareMathAlphabet{\mathfrak}{U}{euf}{m}{n}
\SetMathAlphabet{\mathfrak}{bold}{U}{euf}{b}{n}
\DeclareSymbolFont{AMSb}{U}{msb}{m}{n}
\DeclareSymbolFontAlphabet{\mathbb}{AMSb}
\newtheorem{theorem}{Theorem}[section]
\newtheorem{lemma}[theorem]{Lemma}
\newtheorem{proposition}[theorem]{Proposition}
\newtheorem{corollary}[theorem]{Corollary}
\theoremstyle{remark}
\newtheorem{remark}[theorem]{Remark}
\theoremstyle{definition}
\newtheorem{definition}[theorem]{Definition}
\DeclareMathSymbol{\Gamma}{\mathalpha}{letters}{0}
\DeclareMathSymbol{\Delta}{\mathalpha}{letters}{1}
\DeclareMathSymbol{\Theta}{\mathalpha}{letters}{2}
\DeclareMathSymbol{\Lambda}{\mathalpha}{letters}{3}
\DeclareMathSymbol{\Xi}{\mathalpha}{letters}{4}
\DeclareMathSymbol{\Pi}{\mathalpha}{letters}{5}
\DeclareMathSymbol{\Sigma}{\mathalpha}{letters}{6}
\DeclareMathSymbol{\Upsilon}{\mathalpha}{letters}{7}
\DeclareMathSymbol{\Phi}{\mathalpha}{letters}{8}
\DeclareMathSymbol{\Psi}{\mathalpha}{letters}{9}
\DeclareMathSymbol{\Omega}{\mathalpha}{letters}{10}
\DeclareMathSymbol{\alpha}{\mathalpha}{letters}{11}
\DeclareMathSymbol{\beta}{\mathalpha}{letters}{12}
\DeclareMathSymbol{\gamma}{\mathalpha}{letters}{13}
\DeclareMathSymbol{\delta}{\mathalpha}{letters}{14}
\DeclareMathSymbol{\epsilon}{\mathalpha}{letters}{15}
\DeclareMathSymbol{\zeta}{\mathalpha}{letters}{16}
\DeclareMathSymbol{\eta}{\mathalpha}{letters}{17}
\DeclareMathSymbol{\theta}{\mathalpha}{letters}{18}
\DeclareMathSymbol{\iota}{\mathalpha}{letters}{19}
\DeclareMathSymbol{\kappa}{\mathalpha}{letters}{20}
\DeclareMathSymbol{\lambda}{\mathalpha}{letters}{21}
\DeclareMathSymbol{\mu}{\mathalpha}{letters}{22}
\DeclareMathSymbol{\nu}{\mathalpha}{letters}{23}
\DeclareMathSymbol{\xi}{\mathalpha}{letters}{24}
\DeclareMathSymbol{\pi}{\mathalpha}{letters}{25}
\DeclareMathSymbol{\rho}{\mathalpha}{letters}{26}
\DeclareMathSymbol{\sigma}{\mathalpha}{letters}{27}
\DeclareMathSymbol{\tau}{\mathalpha}{letters}{28}
\DeclareMathSymbol{\upsilon}{\mathalpha}{letters}{29}
\DeclareMathSymbol{\phi}{\mathalpha}{letters}{30}
\DeclareMathSymbol{\chi}{\mathalpha}{letters}{31}
\DeclareMathSymbol{\psi}{\mathalpha}{letters}{32}
\DeclareMathSymbol{\omega}{\mathalpha}{letters}{33}
\DeclareMathSymbol{\varepsilon}{\mathalpha}{letters}{34}
\DeclareMathSymbol{\vartheta}{\mathalpha}{letters}{35}
\DeclareMathSymbol{\varpi}{\mathalpha}{letters}{36}
\DeclareMathSymbol{\varphi}{\mathalpha}{letters}{39}
\DeclareMathSymbol{\varrho}{\mathalpha}{letters}{37}
\DeclareMathSymbol{\varsigma}{\mathalpha}{letters}{38}
\begin{document}

\title{On a generalized pseudorelativistic Schr\"{o}dinger equation with supercritical growth}
\author{Simone Secchi\thanks{Supported by the MIUR 2015 PRIN project ``Variational methods, with applications to problems in mathematical physics and geometry'' and by INdAM through a GNAMPA 2017 project.} \\ {\fontsize{9pt}{12pt}\selectfont Dipartimento di Matematica e Applicazioni, Universit\`a di Milano Bicocca} \\ {\fontsize{9pt}{12pt}\selectfont via R. Cozzi 55, Milano (Italy).} \\ {\fontsize{9pt}{12pt}\selectfont \texttt{Simone.Secchi@unimib.it}}}

\maketitle

\hfill {\itshape To Francesca, with love}

\bigskip
	
\begin{abstract}
	\noindent We prove that the generalized  pseudorelativistic equation 
\begin{displaymath}
\left( -c^2 \Delta + m^{2} c^{\frac{2}{1-s}} \right)^{s} u - m^{2s} c^{\frac{2s}{1-s}} u  +\mu u = |u|^{p-1} u
\end{displaymath}
can be solved for large values of the ``light speed'' $c$ even when
$p$ crosses the critical value for the fractional Sobolev embedding.

\noindent\emph{Keywords:} Schr\"{o}dinger equation; fractional Laplacian

\noindent\emph{MSC:} 35J60, 35S05, 35Q55
\end{abstract}

\section{Introduction}

The \emph{pseudorelativistic Schr\"{o}dinger equation}
\begin{equation} \label{eq:PSE}
\mathrm{i} \frac{\partial \psi}{\partial t} = \sqrt{\strut -c^2 \Delta +m^2 c^4} \psi - mc^2 \psi + f(|\psi|^2)\psi,
\end{equation}
in which $c$ denotes the speed of the light, $m>0$ represents the particle mass and $f \colon [0,\infty) \to \mathbb{R}$ is a nonlinear function, is one of the relativistic versions of the more familiar NLS
\begin{equation*}
\mathrm{i} \frac{\partial \psi}{\partial t} = - \frac{1}{2m} \Delta  \psi  + f(|\psi|^2)\psi.
\end{equation*}
Equation (\ref{eq:PSE}) describes, from the physical viewpoint, the dynamics of systems consisting of identical spin-0 bosons whose motions are relativistic, like \emph{boson stars}. We refer to \cite{DallAcqua08,Elgart07,Frohlich07,Lenzmann07,Lieb84,Lieb87} for the rigorous derivation of the equation and the study of its dynamical properties.

When $f(t)=-t^{\frac{p-1}{2}}$, standing waves $\psi(t,x)=\exp (\mathrm{i} \mu t) u(x)$ must satisfy the stationary equation
\begin{equation} \label{eq:PSEbis}
	\sqrt{\strut -c^2 \Delta + m^2 c^4}u -m c^2 u + \mu u = |u|^{p-1} u.
\end{equation}
Coti Zelati and Nolasco showed in \cite{Coti11} that for $c=1$ and $1<p<(N+1)/(N-1)$ there exists a radially symmetric positive solution to \eqref{eq:PSEbis}. This result was extended later in \cite{Choi16}. Another variant of the above equation is the pseudorelativistic Hartree equation
\begin{equation*}
\sqrt{\strut -c^2 \Delta + m^2 c^4} u - m c^2 u + \mu u = \left( I_\alpha \star |u|^{p} \right) |u|^{p-2}u,
\end{equation*}
where $I_\alpha \colon \mathbb{R}^N \setminus \{0\} \to \mathbb{R}$ is
a singular convolution kernel. If, formally, $I_\alpha$ degenerates to
a Dirac delta, the Hartree equation reduces to \eqref{eq:PSE}. The
case in which $I_\alpha (x) = |x|^{N-\alpha}$ is particularly
important. We refer to \cite{Secchi16} for a survey of recent results.

\bigskip

As already noticed, the application of variational techniques to
(\ref{eq:PSEbis}) requires a bound from above on the exponent $p$,
since the natural Sobolev space in which (\ref{eq:PSEbis}) can be set
is $H^{1/2}(\mathbb{R}^N)$ and this space is embedded into
$L^{q}(\mathbb{R}^N)$ only if $q \leq 2N/(N-1)$. Local compactness of
the embedding exludes the limiting exponent, and therefore it is
customary to assume that $1<p<(N+1)/(N-1)$.

On the other hand, if we observe that the pseudorelativistic operator
\(\sqrt{\strut -c^2 \Delta + m^2 c^4} - mc^2\) \emph{converges} to
$-\frac{1}{2m} \Delta$ as $c \to +\infty$, we may expect that
solutions could exist for $c \gg 1$ as soon as $p < (N+2)/(N-2)$,
namely below the critical Sobolev exponent for the
operator~$-\frac{1}{2m}\Delta+1$. This fact has been proved recently
in \cite{Choi17}.

\bigskip

In this paper we consider the generalized model
	\begin{equation}
	\label{eq:1}
	\left( -c^2 \Delta + m^{2} c^{\frac{2}{1-s}} \right)^{s} u - m^{2s} c^{\frac{2s}{1-s}} u  +\mu u = |u|^{p-1} u,
	\end{equation}
	where $1/2 < s <1$ and $p>1$, which reduces to \eqref{eq:PSEbis} for $s=1/2$. For $c=1$, this equation has been studied in \cite{Ambrosio16,Ikoma17,Secchi16-1,Secchi17,Secchi17-1}. Following the ideas of \cite{Choi17}, we  prove that (\ref{eq:1}) is actually solvable in the whole range $1<p<(N+2)/(N-2)$ in the \emph{r\'{e}gime} $c \gg 1$.
\begin{theorem} \label{th:main}
Let $N \geq 3$ and $1/2<s<1$. For every $p \in (1,\frac{N+2}{N-2})$, equation \eqref{eq:1} admits at least a nontrivial solution in $H_{\mathrm{rad}}^1 \cap L^\infty$ provided that $m^{2s} c^{\frac{2s}{1-s}}/\mu$ is sufficiently large.
\end{theorem}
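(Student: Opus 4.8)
The plan is to exploit the two facts recalled in the introduction: the operator in \eqref{eq:1} converges to a multiple of $-\Delta$ as $c\to+\infty$, and the resulting limiting equation is subcritical precisely for $1<p<(N+2)/(N-2)$. Since $1/2<s<1$, the operator in \eqref{eq:1} has order $2s<2$, so for fixed $c$ its energy space is $H^s(\mathbb R^N)$, whose embedding into $L^{p+1}$ fails for $p>(N+2s)/(N-2s)$; hence in the range of the statement the right-hand side is in general supercritical for $H^s$, and a direct variational attack there is hopeless. Following \cite{Choi17}, I would (i) rescale so that $c\gg1$ becomes the limit of a small parameter $\varepsilon\to0^+$; (ii) truncate the nonlinearity to a subcritical one and solve the truncated equation, for \emph{every} $\varepsilon$, by the mountain pass theorem in the radial class; (iii) produce a priori bounds uniform in $\varepsilon$, pass to the limit $\varepsilon\to0$, and use the limiting ground state to show that the truncation is inactive once $c$ is large.

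\emph{Reformulation.} Setting $\varepsilon:=c^{-2s/(1-s)}$ one has $-c^2\Delta+m^2c^{2/(1-s)}=m^2c^{2/(1-s)}\bigl(1-\tfrac{\varepsilon}{m^2}\Delta\bigr)$ and $m^{2s}c^{2s/(1-s)}=m^{2s}/\varepsilon$, so \eqref{eq:1} is equivalent to
\begin{equation}\label{eq:reform}
P_\varepsilon u+\mu u=|u|^{p-1}u,\qquad P_\varepsilon:=\frac{m^{2s}}{\varepsilon}\Bigl[\bigl(1-\tfrac{\varepsilon}{m^2}\Delta\bigr)^{s}-1\Bigr].
\end{equation}
The Fourier symbol $p_\varepsilon$ of $P_\varepsilon$ is nonnegative, and the concavity of $r\mapsto(1+r)^s$ yields the key two-sided bound
\begin{equation}\label{eq:symbol}
\kappa_0\,\langle\xi\rangle^{2s}\ \le\ p_\varepsilon(\xi)+\mu\ \le\ \mu+s\,m^{2s-2}|\xi|^{2},\qquad \xi\in\mathbb R^N,
\end{equation}
with $\kappa_0=\kappa_0(N,s,m,\mu)>0$ independent of $\varepsilon\in(0,1)$, while $p_\varepsilon(\xi)\to s\,m^{2s-2}|\xi|^2$ pointwise as $\varepsilon\to0^+$; moreover $-P_\varepsilon$ generates a sub‑Markovian semigroup, so $(P_\varepsilon+\mu)^{-1}$ is positivity preserving and bounded on $L^r$ with norm $1/\mu$ for every $r\in[1,\infty]$. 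The formal limit of \eqref{eq:reform} is the subcritical equation $-s\,m^{2s-2}\Delta U+\mu U=|U|^{p-1}U$, whose unique positive radial ground state $U\in H^1\cap L^\infty(\mathbb R^N)$ (by Kwong's theorem) has mountain pass level $c_\star=J(U)$, $J$ being the associated energy functional; $c_\star$ and $\|U\|_\infty$ depend only on $N,s,m,\mu,p$. Fix once and for all a number $K$ large enough — depending only on $N,s,m,\mu,p$; it will have to exceed $\|U\|_\infty$ and to make the test path below admissible — and replace $|t|^{p-1}t$ by a $C^1$ truncation $g_K$ that coincides with it on $[-K,K]$, has subcritical growth $|g_K(t)|\lesssim K^{p-q}|t|^q$ with $q\in\bigl(1,\tfrac{N+2s}{N-2s}\bigr)$ (and $q<p$ when $p$ is supercritical for $H^s$), satisfies the Ambrosetti--Rabinowitz condition, and obeys $G_K(t):=\int_0^tg_K\le\tfrac1{p+1}|t|^{p+1}$ for all $t$.

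\emph{Truncated problem and limit.} For each $\varepsilon\in(0,1)$ the functional $I_\varepsilon(u)=\tfrac12\langle(P_\varepsilon+\mu)u,u\rangle-\int G_K(u)$ on $H^s_{\mathrm{rad}}(\mathbb R^N)$ has, by \eqref{eq:symbol} and the subcriticality of $q$, the mountain pass geometry and the Palais--Smale condition (radial compactness together with Ambrosetti--Rabinowitz), hence a nontrivial critical point $u_\varepsilon$, taken positive via the maximum principle for $P_\varepsilon+\mu$. Using $t\mapsto tU$ as a test path — admissible for $K$ large because $I_\varepsilon(tU)\le J(tU)$ as long as $t\|U\|_\infty\le K$, where the right-hand side of \eqref{eq:symbol} and $G_K\le\tfrac1{p+1}|\cdot|^{p+1}$ are used — one gets $\rho\le c_\varepsilon:=I_\varepsilon(u_\varepsilon)\le c_\star$ with $\rho>0$ independent of $\varepsilon$; then the Ambrosetti--Rabinowitz identity and the left-hand side of \eqref{eq:symbol} give $\|u_\varepsilon\|_{H^s}\le M=M(N,s,m,\mu,p)$. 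Along a subsequence, $u_\varepsilon\rightharpoonup u_0$ in $H^s_{\mathrm{rad}}$ and $u_\varepsilon\to u_0$ in $L^r$ for $2<r<2N/(N-2s)$; the lower bound $c_\varepsilon\ge\rho$ forces $u_0\ne0$, and a standard limiting argument (using \eqref{eq:symbol} and the $L^r$‑convergence) shows that $u_0$ solves the truncated limiting equation $-s\,m^{2s-2}\Delta u_0+\mu u_0=g_K(u_0)$ with $I_{0,K}(u_0)=\lim c_\varepsilon\le c_\star$, where $I_{0,K}$ is the $\varepsilon=0$ functional. But $u_0$, being a nontrivial critical point of a subcritical functional, lies on the Nehari manifold of $I_{0,K}$, and truncating down makes $I_{0,K}\ge J$, so $I_{0,K}(u_0)\ge c_\star$; hence $\lim c_\varepsilon=c_\star$. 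Comparing $\max_{t}I_{0,K}(tu_0)\ge\max_{t}J(tu_0)\ge c_\star=I_{0,K}(u_0)$ and using the uniqueness of the maximum along the ray forces the Nehari rescaling of $u_0$ to be a ground state of $J$, whence $u_0=U$ by Kwong; in particular $\|u_0\|_\infty=\|U\|_\infty<K$, so $u_0$ actually solves the \emph{untruncated} limiting equation. Finally a bootstrap of $u_\varepsilon=(P_\varepsilon+\mu)^{-1}g_K(u_\varepsilon)$ along $L^r$‑based Sobolev spaces, with the uniform mapping properties of $(P_\varepsilon+\mu)^{-1}$ and the resolvent convergence $(P_\varepsilon+\mu)^{-1}\to(-s\,m^{2s-2}\Delta+\mu)^{-1}$, upgrades this to $u_\varepsilon\to U$ in $L^\infty$.

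\emph{Conclusion and the main obstacle.} Since $u_\varepsilon\to U$ in $L^\infty$ and $\|U\|_\infty<K$, for $\varepsilon$ small enough $\|u_\varepsilon\|_{L^\infty}<K$, so $g_K(u_\varepsilon)=|u_\varepsilon|^{p-1}u_\varepsilon$ and $u_\varepsilon$ solves \eqref{eq:reform}, i.e.\ \eqref{eq:1}; applying $(P_\varepsilon+\mu)^{-1}$ once more to $|u_\varepsilon|^{p-1}u_\varepsilon\in H^s\cap L^\infty$ gives $u_\varepsilon\in H^{3s}\subset H^1$ because $3s>3/2$. Undoing the rescaling, ``$\varepsilon$ small'' is exactly ``$m^{2s}c^{2s/(1-s)}/\mu$ large'', which is the assertion. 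The main difficulty is the limit analysis of the previous paragraph. On the one hand one needs $\varepsilon$‑uniform a priori bounds — this rests entirely on the sandwich \eqref{eq:symbol}, which keeps both the energy estimates and the elliptic‑regularity constants from degenerating as $c\to\infty$. On the other hand one must show that the weak limit $u_0$ of the truncated solutions does not see the truncation, i.e.\ $\|u_0\|_\infty<K$; this is forced by the energy identity $\lim c_\varepsilon=c_\star$ combined with the uniqueness of the limiting ground state, and it is precisely here that the passage $c\to\infty$ is indispensable, since the bound $\|u_\varepsilon\|_{L^\infty}<K$ is recovered only in the limit. Everything else — mountain pass geometry, Palais--Smale via radial symmetry, the regularity bootstrap — is routine.
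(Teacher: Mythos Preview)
Your outline is plausible and, if carried out carefully, would give an alternative proof; but it is a \emph{genuinely different} route from the paper. The paper does not use a variational/truncation scheme at all. Instead it normalizes to $P_c(D)u=|u|^{p-1}u$ with $P_\infty(D)=-\Delta+1$, writes $u=u_\infty+w$ around the limiting ground state $u_\infty$, and sets up a contraction for $w$ in $H^1_{\mathrm{rad}}\cap W^{1,q}$ (any $q>N$) via the map $\Phi_c(w)=\mathcal L_{c,\infty}^{-1}\bigl[(P_\infty-P_c)u_\infty+Q(w)\bigr]$ with $\mathcal L_{c,\infty}=P_c(D)-pu_\infty^{p-1}$. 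The technical engine is H\"ormander--Mikhlin: pointwise multiplier bounds on $P_c^{-1}-P_\infty^{-1}$ and on $P_c/P_\infty$ give $\|P_c^{-1}-P_\infty^{-1}\|_{L^q\to L^q}\lesssim c^{-2s/(1-s)}$, hence $\mathcal L_{c,\infty}$ is invertible (with $c$-uniform bound) thanks to the radial non-degeneracy $\ker\mathcal L_\infty\cap H^1_{\mathrm{rad}}=\{0\}$, and the Banach fixed point theorem closes. So the paper trades your global variational machinery for a local perturbative one: it uses \emph{non-degeneracy} of $u_\infty$ rather than Kwong's \emph{uniqueness}, it needs no truncation, and it delivers for free the quantitative rate $\|u_c-u_\infty\|_{H^1\cap W^{1,q}}=O(c^{-2s^2/(1-s)})$ (or $O(c^{-2s/(1-s)})$ for $p>2$). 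Your approach is softer and conceptually robust --- uniqueness of the limiting ground state is a weaker input than non-degeneracy --- but it has more moving parts that must be justified: the passage $I_{0,K}(u_0)=\lim c_\varepsilon$ requires the Nehari rewriting you only implicitly invoke; the step $t_0u_0=U\Rightarrow u_0=U$ needs $g_K(t)/t$ strictly increasing so that the maximum along rays is unique; and the upgrade $u_\varepsilon\to U$ in $L^\infty$ needs a uniform-in-$\varepsilon$ bootstrap (your symbol bound gives a uniform $2s$-derivative gain, which suffices since $q<(N+2s)/(N-2s)$, but this should be spelled out). None of these is a genuine obstruction, but the paper's argument is considerably shorter and yields more.
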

\begin{remark}
The restriction $1/2\leq s<1$ is somehow natural, if we want bounded solutions. In the fractional framework, an exponent $s<1/2$ does not ensure high regularity properties of solutions.
\end{remark}
\begin{remark}
Although we have stated Theorem \ref{th:main} for any dimension~$N\geq 3$, it is rather easy to check that the same result holds also for~$N=1$ or $2$. In this case, however, the Sobolev critical exponent no longer exists.
\end{remark}
Our approach is based on the reduction of equation~\eqref{eq:1} to a fixed-point problem. By means of some \emph{pseudo-differential calculus} we can overcome the limitation of the variational setting.

\bigskip

We leave as an open problem the study of \eqref{eq:1} in the \emph{r\'{e}gime} $m^{2s}c^{\frac{2s}{1-s}}/\mu \ll 1$. Some results appear in \cite{Ambrosio16} when $c=1=\mu$ and, consequently, $m \to 0$. Anyway, since the limit equation is in this case
\begin{equation*}
(-\Delta)^s u + u =|u|^{p-1}u \quad \hbox{in $\mathbb{R}^N$},
\end{equation*}
we do not expect any improvement in the range of the exponent~$p$.

\section{Preliminaries and estimates}

It will be useful to collect some notation that we are going to employ throughout the paper.
\begin{itemize}
	\item An integral over the whole space $\mathbb{R}^N$ is denoted simply by $\int$ instead of $\int_{\mathbb{R}^N}$.
	\item The Fourier transform of a (suitably regular) function $\varphi$ is denoted by
	\begin{equation*}
	\widehat{\varphi} \colon \xi \mapsto (2 \pi)^{-N/2} \int e^{-\mathrm{i}\xi \cdot x} \varphi(x)\, dx.
	\end{equation*}
	\item Function spaces like $L^p(\mathbb{R}^N)$ or $W^{1,p}(\mathbb{R}^N)$ are denoted by $L^p$ and $W^{1,p}$. 
	\item If $X$ is some function space, we denote by $X_{\mathrm{rad}}$ the subspace of $X$ consisting of radially symmetric functions.
	\item The identity operator is denoted by $I$. Sometimes, however, we denote the multiplication operator against a given function $u$ by $u$ instead of $uI$.
	\item Derivatives are always denoted by the letter $D$ or by the symbol $\partial$. In particular, $\partial_{x_j}$ denotes the partial derivative with respect to the variable $x_j$. If $\alpha=(\alpha_1,\ldots,\alpha_N)$ is a multi-index, we denote $D^\alpha = \partial_{x_1}^{\alpha_1} \cdots \partial_{x_j}^{\alpha_j}$.
	\item Since we are interested in asymptotic estimates as $c \to +\infty$, we use the symbol $\lesssim$ to denote an inequality with a multiplicative constant independent of $c$. Therefore $a \lesssim b$ means that $a \leq C b$ with some constant $C>0$ that does not depend on $c$.
	\item For two given Banach spaces, we denote by $L(X,Y)$ the space of continuous linear operators from $X$ to $Y$. The norm in $L(X,Y)$ is the usual one: $\| \Lambda \|_{L(X,Y)} = \sup_{\|x\|_X=1} \|\Lambda x \|_Y$.
	\item If $\Lambda$ is an invertible linear operator, we sometimes use the ``algebraic'' piece of notation $1/\Lambda$ to denote the inverse $\Lambda^{-1}$.
\end{itemize}
  We begin with an ``almost necessary'' condition so that solutions to \eqref{eq:1} may exist. This is also a motivation for our attempt to construct a solution in a suitable \emph{supercritical} setting.
\begin{theorem}
If $p \geq \frac{N+2s}{N-2s}$ but $m^{2s} c^{\frac{2s}{1-s}} \leq \mu$, there is no non-trivial solution $u \in H^{s} (\mathbb{R}^N) \cap L^\infty(\mathbb{R}^N)$.
\end{theorem}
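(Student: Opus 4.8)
The proof uses a Pohozaev-type identity together with the equation tested against $u$; the hypothesis $m^{2s}c^{\frac{2s}{1-s}}\le\mu$ enters only through an elementary pointwise inequality between two symbols. Write $b:=m^{2}c^{\frac{2}{1-s}}$ and $a:=m^{2s}c^{\frac{2s}{1-s}}=b^{s}$, so that \eqref{eq:1} reads $P_c(D)u+\mu u=|u|^{p-1}u$, where $P_c$ denotes the Fourier multiplier with the real, smooth symbol $P_c(\xi)=(c^{2}|\xi|^{2}+b)^{s}-a\ge0$, $P_c(0)=0$. The first step is to prove that any $u\in H^{s}\cap L^{\infty}$ solving \eqref{eq:1} is in fact smooth and decays, together with all its derivatives, at infinity: this follows by the standard bootstrap, since $(P_c(D)+\mu)^{-1}$ has a symbol dominated by $C(1+|\xi|^{2})^{-s}$, hence smooths by $2s$ derivatives and has a kernel that decays at infinity, while $|u|^{p-1}u$ inherits Sobolev regularity from $u$. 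I expect this regularity-and-decay step to be the main obstacle, since it is precisely what legitimises the integrations by parts used below; the rest is essentially algebra.

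Testing the equation against $u$ and applying Plancherel (licit because $u\in L^{2}\cap L^{\infty}\subset L^{p+1}$ and $\sqrt{P_c}\,\widehat u\in L^{2}$, as $0\le P_c(\xi)\le C(1+|\xi|^{2})^{s}$) gives
\begin{equation}\label{eq:nonexist-nehari}
\int P_c(\xi)\,|\widehat u(\xi)|^{2}\,d\xi+\mu\int u^{2}=\int|u|^{p+1}.
\end{equation}
Testing instead against $x\cdot\nabla u$ produces a Pohozaev-type identity. The only non-routine term is $\int(P_c(D)u)\,(x\cdot\nabla u)\,dx$, which I would evaluate on the Fourier side: from $\widehat{x\cdot\nabla u}=-N\widehat u-\xi\cdot\nabla_{\xi}\widehat u$ and one integration by parts in $\xi$ one obtains
\[
\int(P_c(D)u)\,(x\cdot\nabla u)\,dx=-\frac N2\int P_c(\xi)\,|\widehat u(\xi)|^{2}\,d\xi+\frac12\int\xi\cdot\nabla_{\xi}P_c(\xi)\,|\widehat u(\xi)|^{2}\,d\xi.
\]
Combining this with $\int u\,(x\cdot\nabla u)=-\tfrac N2\int u^{2}$ and $\int|u|^{p-1}u\,(x\cdot\nabla u)=-\tfrac N{p+1}\int|u|^{p+1}$, and eliminating $\int P_c|\widehat u|^{2}$ by means of \eqref{eq:nonexist-nehari}, the identity coming from the test against $x\cdot\nabla u$ reduces to
\begin{equation}\label{eq:nonexist-pohozaev}
\int\xi\cdot\nabla_{\xi}P_c(\xi)\,|\widehat u(\xi)|^{2}\,d\xi=\frac{N(p-1)}{p+1}\int|u|^{p+1}.
\end{equation}

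Finally I would compare the two symbols. Writing $\tau:=c^{2}|\xi|^{2}$, one computes $\xi\cdot\nabla_{\xi}P_c=2s\tau(\tau+b)^{s-1}$, hence
\[
2s\bigl(P_c(\xi)+\mu\bigr)-\xi\cdot\nabla_{\xi}P_c(\xi)=2s\bigl(b(\tau+b)^{s-1}-b^{s}+\mu\bigr).
\]
Since $0<s<1$, the map $\tau\mapsto b(\tau+b)^{s-1}$ decreases strictly from $b^{s}=a$ (at $\tau=0$) to $0$, so $b(\tau+b)^{s-1}-b^{s}\in(-a,0]$; the hypothesis $a\le\mu$ then forces $2s(P_c(\xi)+\mu)-\xi\cdot\nabla_{\xi}P_c(\xi)>2s(\mu-a)\ge0$ for every $\xi\in\mathbb{R}^{N}$. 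Now suppose $u\not\equiv0$. Multiplying this strict pointwise inequality by $|\widehat u|^{2}$, integrating, and inserting \eqref{eq:nonexist-pohozaev} together with \eqref{eq:nonexist-nehari} (which reads $\int(P_c+\mu)|\widehat u|^{2}=\int|u|^{p+1}>0$) we get $\frac{N(p-1)}{p+1}\int|u|^{p+1}<2s\int|u|^{p+1}$, that is $N(p-1)<2s(p+1)$, i.e.\ $p<\frac{N+2s}{N-2s}$ — contradicting the assumption $p\ge\frac{N+2s}{N-2s}$. Therefore $u\equiv0$, as claimed.
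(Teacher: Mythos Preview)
Your proof is correct and follows essentially the same Pohozaev-plus-Nehari strategy as the paper. The only cosmetic difference is organisational: the paper first derives a pointwise commutator identity for $(a^{2}-b^{2}\Delta)^{s}$ with $x\cdot\nabla$ in $x$-space and then integrates, keeping explicit the two non-negative terms $2\kappa s\int u^{2}$ and $2a^{2}s\int u\,(a^{2}-b^{2}\Delta)^{s-1}u$, whereas you work on the Fourier side throughout and compress the same information into the single symbol inequality $\xi\cdot\nabla_{\xi}P_{c}<2s(P_{c}+\mu)$; expanding your inequality recovers exactly those two terms.
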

\begin{proof}
Let us consider, for simplicity, the equation
\begin{equation} \label{eq:poho}
(a^2 - b^2 \Delta)^s u = f(u) \quad \text{in $\mathbb{R}^N$},
\end{equation}
where $f$ is a suitable nonlinearity. For a smooth function $\varphi$, we recall that
\begin{equation*}
\mathcal{F} (\partial_{x_k} \varphi) = \mathrm{i} \xi_k \hat{\varphi}, \qquad \mathcal{F}(x_k \varphi) = \mathrm{i} \partial_{\xi_k} \hat{\varphi}.
\end{equation*}
This implies that
\begin{equation*}
\mathcal{F}(x \cdot \nabla \varphi) \colon \xi \mapsto - \left( \xi \cdot \nabla \hat{\varphi} + N \hat{\varphi} \right)
\end{equation*}
and
\begin{equation*}
\mathcal{F} \left( (a^2-b^2 \Delta)^s (x \cdot \nabla \varphi) \right) \colon \xi \mapsto - (a^2+b^2 |\xi|^2)^s \left( \xi \cdot \nabla \hat{\varphi} + N \hat{\varphi} \right).
\end{equation*}
Hence
\begin{multline*}
\mathcal{F} \left( x \cdot \nabla (a^2-b^2 \Delta)^s \varphi \right) \colon \xi \mapsto - \left( \xi \cdot \nabla \mathcal{F} \left( (a^2-b^2 \Delta)^s \varphi \right) + N \mathcal{F} \left( (a^2-b^2  \Delta)^s \varphi \right) \right) \\
= - \left( \xi \cdot \nabla \left( (a^2+b^2 |\xi|^2 )^s \hat{\varphi} \right) + N (a^2+b^2 |\xi|^2)^s \hat{\varphi} \right) \\
= -(a^2 + b^2 |\xi|^2 )^s \left( \xi \cdot \nabla \hat{\varphi} + N \hat{\varphi} \right) - 2 sb^2 (a^2+b^2 |\xi|^2)^{s-1} |\xi|^2 \hat{\varphi} \\
= -(a^2 + b^2 |\xi|^2 )^s \left( \xi \cdot \nabla \hat{\varphi} + N \hat{\varphi} \right)  -2s (a^2+b^2|\xi|^2)^s \hat{\varphi} + 2sa^2 (a^2+b^2 |\xi|^2)^{s-1} \hat{\varphi}.
\end{multline*}
We have proved the following \emph{pointwise identity}:
\begin{equation} \label{eq:pohopunt}
(a^2-b^2 \Delta)^s (x \cdot \nabla \varphi) - x \cdot \nabla \left( (a^2-b^2 \Delta)^s \varphi \right) = 2s (a^2-b^2 \Delta)^s \varphi - 2a^2 s (a^2-b^2 \Delta)^{s-1} \varphi.
\end{equation}
In the rest of the proof, we will be somehow sketchy. For a rigorous argument, we should replace $u$ with $u_\varepsilon = \rho_\varepsilon \star u$, where $\rho_\varepsilon$ is a mollifier, and then take the limit as $\varepsilon \to 0$. We omit the technical details. Using \eqref{eq:pohopunt} we get
\begin{multline*}
\int (a^2 -b^2 \Delta)^s u (x \cdot \nabla u) = \int u (a^2-b^2 \Delta)^s (x \cdot \nabla u) = \int u \left( x \cdot \nabla f(u) + 2s f(u) - 2a^2 s (a^2-b^2 \Delta)^{s-1}u \right) \\
= -N \int u f(u) + N \int F(u) + \int 2s uf(u) - \int 2a^2 s u(a^2-b^2 \Delta)^{s-1}u.
\end{multline*} 
Since
\begin{equation*}
\int f(u) \left( x \cdot \nabla u \right) = -N \int F(u),
\end{equation*}
we find the identity
\begin{equation} \label{eq:pohozaev}
(N-2s) \int u f(u) - 2N \int F(u) + 2a^2 s \int u (a^2 -b^2 \Delta)^{s-1}u.
\end{equation}
We observe that
\begin{equation*}
\int u (a^2 -b^2 \Delta)^{s-1}u = \int (a^2 +b^2 |\xi|^2)^{s-1} |\hat{u}|^2 \geq 0.
\end{equation*}
We now choose
\begin{equation*}
f(s) = |s|^{p-1}s - \kappa s
\end{equation*}
with $\kappa \in \mathbb{R}$. Then $F(s) = \frac{1}{p+1} |s|^{p+1} - \frac{\kappa}{2} s^2$, and \eqref{eq:pohozaev} yields
\begin{equation*}
\left( \frac{1}{p+1} - \frac{N-2s}{2N} \right) \int |u|^{p+1} = \frac{\kappa s}{N} \int |u|^2 + \frac{a^2 s}{N} \int u (a^2-b^2\Delta)^{s-1}u.
\end{equation*}
If $p \geq \frac{N+2s}{N-2s}$ and $\kappa \geq 0$, then $u=0$.

We apply this conclusion with
\begin{equation*}
a^2 = m^2 c^{\frac{2}{1-s}}, \quad b^2 = c^2, \quad \kappa = \mu - m^{2s} c^{\frac{2s}{1-s}}
\end{equation*}
to prove our result.
\end{proof}
On the other hand, we are going to construct a (non-trivial) solution to \eqref{eq:1} when the quotient $m^{2s} c^{\frac{2s}{1-s}}/\mu$ is sufficiently large.
As a first step, we show that some parameters in~\eqref{eq:1} are not essential.  It is anyway clear that the pseudorelativistic Schr\"{o}dinger equation is \emph{not} scale-invariant, but this obstacle is irrelevant in the limit $c \to +\infty$.
\begin{proposition}
The equation
\begin{equation*}
\left( \left( -\tilde{c}^2 \Delta + m^2 \tilde{c}^{\frac{2}{1-s}} \right)^s - m^{2s} \tilde{c}^{\frac{2s}{1-s}} \right) v + \mu v = |v|^{p-1}v
\end{equation*}
is equivalent to  
\begin{equation} \label{eq:36}
\left( -c^2 \Delta +s^{1-s} c^{\frac{2}{1-s}} \right)^s  u - s^{\frac{s}{1-s}} c^{\frac{2s}{1-s}} u + u = |u|^{p-1}u
\end{equation}
for a suitable choice of~$c$ and~$\tilde{c}$.
\end{proposition}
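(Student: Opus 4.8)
The plan is to produce an explicit change of variables of the form $v(x)=\alpha\,u(\beta x)$, with $\alpha,\beta>0$ and a relation between $c$ and $\tilde c$ to be fixed afterwards, that carries one equation into the other. The only analytic ingredient is the behaviour of the operator $\bigl(-\kappa^{2}\Delta+\lambda\bigr)^{s}$ under dilations: since on the Fourier side it acts as multiplication by $(\kappa^{2}|\xi|^{2}+\lambda)^{s}$, a direct computation with the Fourier transform gives
\begin{equation*}
\bigl(-\tilde c^{2}\Delta+\lambda\bigr)^{s}\bigl[u(\beta\,\cdot)\bigr](x)=\Bigl[\bigl(-\tilde c^{2}\beta^{2}\Delta+\lambda\bigr)^{s}u\Bigr](\beta x),
\end{equation*}
an identity one verifies first on Schwartz functions and then extends by density to the space in which we work. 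Plugging $v=\alpha\,u(\beta\,\cdot)$ into the equation for $v$, cancelling the factor $\alpha$, renaming $\beta x$ as $x$, and finally dividing through by $\mu$, one arrives at
\begin{equation*}
\frac{1}{\mu}\bigl(-\tilde c^{2}\beta^{2}\Delta+m^{2}\tilde c^{\frac{2}{1-s}}\bigr)^{s}u-\frac{m^{2s}\tilde c^{\frac{2s}{1-s}}}{\mu}\,u+u=\frac{\alpha^{p-1}}{\mu}\,|u|^{p-1}u .
\end{equation*}

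Next I would match this, term by term, with \eqref{eq:36}, which I abbreviate as $\bigl(-c^{2}\Delta+A\bigr)^{s}u-A^{s}u+u=|u|^{p-1}u$, where $A>0$ is the constant occurring there; note that, exactly as in \eqref{eq:1}, the constant subtracted is the $s$-th power of the one inside the bracket, and that $A$ is a prescribed multiple of $c^{\frac{2}{1-s}}$. The nonlinear term forces $\alpha^{p-1}=\mu$, which fixes $\alpha$, and the zeroth-order term $+u$ already agrees. Equality of the leading symbols amounts to
\begin{equation*}
\mu^{-1/s}\bigl(\tilde c^{2}\beta^{2}|\xi|^{2}+m^{2}\tilde c^{\frac{2}{1-s}}\bigr)=c^{2}|\xi|^{2}+A\qquad\text{for every }\xi,
\end{equation*}
that is, $\mu^{-1/s}\tilde c^{2}\beta^{2}=c^{2}$ and $\mu^{-1/s}m^{2}\tilde c^{\frac{2}{1-s}}=A$. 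The key remark is that the remaining linear term imposes no new constraint: raising the second scalar identity to the power $s$ gives automatically $\mu^{-1}m^{2s}\tilde c^{\frac{2s}{1-s}}=A^{s}$, which is precisely the coefficient of that term in \eqref{eq:36}. So the whole matching reduces to the two displayed scalar relations together with $\alpha^{p-1}=\mu$.

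Finally I would solve this small system. Using that $A$ is a prescribed multiple of $c^{\frac{2}{1-s}}$, one raises the first scalar relation to the power $\tfrac{1}{1-s}$ and substitutes it into the second; the powers of $\tilde c$ cancel and one is left with a single equation for $\beta$ with an explicit positive solution, after which the first relation fixes $c^{2}/\tilde c^{2}$ as an explicit positive number depending only on $s$, $m$ and $\mu$. Since $\alpha$, $\beta$ and $c/\tilde c$ are genuine positive reals, the map $u\mapsto v=\alpha\,u(\beta\,\cdot)$ is a bijection between the two solution sets, and reading the substitution backwards yields the converse implication; this is precisely the claimed equivalence. There is no real obstacle here: the only points that require a little care are the justification of the dilation identity for $\bigl(-\kappa^{2}\Delta+\lambda\bigr)^{s}$ on $H^{1}\cap L^{\infty}$, and the bookkeeping of the exponents $\tfrac{1}{1-s}$, $\tfrac{1}{s}$, $\tfrac{s}{1-s}$, since a slip there would turn the two matching conditions into an over-determined system.
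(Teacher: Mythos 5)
Your proof is correct and is essentially the paper's own argument: the paper writes the substitution as $u(x)=\rho\,v(\sigma x)$ (so your $\alpha=1/\rho$, $\beta=1/\sigma$), computes on the Fourier side exactly as you do, and lands on the same values $\alpha^{p-1}=\mu$, $\beta^{2}=\mu m^{2(1-s)}/s$, $\tilde c^{2}=s\,\mu^{\frac{1-s}{s}}m^{-2(1-s)}c^{2}$. One caveat worth recording: your key observation that the subtracted constant is the $s$-th power of the bracketed one is valid for the normalization $s^{\frac{1}{1-s}}c^{\frac{2}{1-s}}$ that the paper actually derives at the end of its proof and uses in the definition of $P_c$, not for the coefficient $s^{1-s}$ printed in \eqref{eq:36}, which appears to be a typo.
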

\begin{proof}
We suppose that $v$ satisfies (at least formally) the equation
\begin{equation*}
\left( \left( -\tilde{c}^2 \Delta + m^2 \tilde{c}^{\frac{2}{1-s}} \right)^s - m^{2s} \tilde{c}^{\frac{2s}{1-s}} \right) v + \mu v = f
\end{equation*}
with $f=|v|^{p-1}v$.
Let us set
\[
u(x) = \rho v \left( \sigma x \right)
\]
for some constants $\rho>0$, $\sigma>0$, so that
\[
\widehat{u}(\xi) = \frac{\rho}{\sigma^N} \widehat{v} \left( \frac{\xi}{\sigma} \right).
\] 
Then, by using Fourier variables,
\begin{equation*}
\left( \tilde{c}^2 |\xi|^2 + m^2 \tilde{c}^{\frac{2}{1-s}} \right)^s \tilde{v} - m^{2s} \tilde{c}^{\frac{2s}{1-s}} \tilde{v} + \mu \tilde{v} = \tilde{f}.
\end{equation*}
Equivalently,
\begin{equation*}
\left( \tilde{c}^2 \left| \frac{\xi}{\sigma} \right|^2 + m^2 \tilde{c}^{\frac{2}{1-s}} \right)^s \tilde{v}\left( \frac{\xi}{\sigma} \right) - m^{2s} \tilde{c}^{\frac{2s}{1-s}} \tilde{v}\left( \frac{\xi}{\sigma} \right) + \mu \tilde{v}\left( \frac{\xi}{\sigma} \right) = \tilde{f}\left( \frac{\xi}{\sigma} \right).
\end{equation*}
If we multiply through by $\rho/\sigma$ and move back to Euclidean variables, recalling that $f=|v|^{p-1}v$,
\begin{equation*}
\left( - \frac{\tilde{c}^2}{\sigma^2} \Delta + m^2 \tilde{c}^{\frac{2}{1-s}} \right)^s u - m^{2s} \tilde{c}^{\frac{2s}{1-s}} u + \mu u = \rho^{1-p} |u|^{p-1}u.
\end{equation*}
Now we choose
\begin{equation*}
	\rho =\mu^{\frac{1}{1-p}}, \quad
	\tilde{c} = \frac{\sqrt{s} \mu^{\frac{1-s}{2s}}}{m^{1-s}} c, \quad
	\sigma^2 = \frac{s}{\mu m^{2(1-s)}}.
\end{equation*}
We remark in particular that $\tilde{c} \to +\infty$ if and only if $c \to +\infty$.
After some elementary but lengthy computation we can show that \eqref{eq:1} reduces to
\begin{equation*} 
\left( -c^2 \Delta + \frac{1}{s^{\frac{1}{s-1}}} c^{\frac{2}{1-s}} \right)^s  u - \frac{1}{s^{\frac{s}{s-1}}} c^{\frac{2s}{1-s}} u + u = |u|^{p-1}u,
\end{equation*}
that is \eqref{eq:36}.
\end{proof}
As a consequence, it is not restrictive to assume that \(m=s^{1/(2-2s)}\) and $\mu=1$,
and we will consider equation~(\ref{eq:36}) in the rest of the paper.

We now introduce the (pseudodifferential) operators
\begin{align*}
	P_c(D) &= \left(  \left( c^2 |D|^2 + \frac{1}{s^{\frac{1}{s-1}}} c^{\frac{2}{2-s}} \right)^s-  \frac{1}{s^{\frac{s}{s-1}}} c^{\frac{2s}{1-s}}
	\right) + 1 \\
	P_\infty (D)&= |D|^2 + 1,
\end{align*}
which are  associated to the symbols
\begin{align*}
	P_c(\xi) &= \left(  \left( c^2 |\xi|^2 + \frac{1}{s^{\frac{1}{s-1}}} c^{\frac{2}{2-s}} \right)^s-  \frac{1}{s^{\frac{s}{s-1}}} c^{\frac{2s}{1-s}}
	\right) + 1  \\
	P_\infty (\xi)&= |\xi|^2 + 1.
\end{align*}
\begin{remark}
In general, we recall that given a symbol $\mathfrak{m} \colon \mathbb{R}^N \to \mathbb{R}$, the associated Fourier multiplier operator $\mathfrak{m}(D)$ is defined (on smooth functions) by
\begin{equation*}
\widehat{\mathfrak{m}(D)} f(\xi) = \mathfrak{m}(\xi) \hat{f}(\xi).
\end{equation*}
\end{remark}
With this notation our problem is equivalent to
\begin{equation*}
P_c(D)u = |u|^{p-1}u
\end{equation*}
as $c \gg 1$. The main idea is to begin with a solution $u_\infty$ of
\begin{equation*}
P_\infty(D)(u_\infty)=|u_\infty|^{p-1}u_\infty
\end{equation*}
and set $w=u-u_\infty$. Hence
\begin{gather*}
P_c(D)w = P_c(D)u-P_c(D)u_\infty = P_c(D)u + P_\infty(D)u_\infty - P_\infty(D)u_\infty - P_c(D) u_\infty \\
= \left( P_\infty(D)-P_c(D)\right) u_\infty + P_c(D)u-P_\infty(D)u_\infty \\
= \left( P_\infty(D)-P_c(D)\right) u_\infty + |w+u_\infty|^{p-1}(w+u_\infty) - u_\infty^p.
\end{gather*}
Equivalently,
\begin{equation*}
\mathcal{L}_{c,\infty} w = \left(P_\infty(D)-P_c(D) \right) u_\infty + Q(w),
\end{equation*}
where
\begin{equation*}
\mathcal{L}_{c,\infty} = P_c(D)-pu_\infty^{p-1}
\end{equation*}
and
\begin{align*}
Q(w) = |w+u_\infty|^{p-1}(w+u_\infty) - u_\infty^p - pu_\infty^{p-1}w.
\end{align*}
If we can invert $\mathcal{L}_{c,\infty}$ in a suitable space, then we may write the fixed-point equation
\begin{equation*}
w = \left(\mathcal{L}_{c,\infty} \right)^{-1} \left( P_\infty(D)-P_c(D) \right)u_\infty + \left(\mathcal{L}_{c,\infty} \right)^{-1} Q(w). 
\end{equation*}
We will prove that the nonlinear operator
\begin{equation} \label{eq:30}
\Phi_c(w) = \left(\mathcal{L}_{c,\infty} \right)^{-1} \left( P_\infty(D)-P_c(D) \right)u_\infty + \left(\mathcal{L}_{c,\infty} \right)^{-1} Q(w)
\end{equation}
is contractive in a small ball of a suitable Sobolev space. To do this we need, as expected, some careful estimates.
\begin{lemma} \label{lem:1.1}
There results
\begin{equation} \label{eq:10}
\frac{|\xi|^2+1}{2} \leq P_c(\xi) \leq  |\xi|^2 \quad\hbox{if $|\xi| \leq c^{\frac{s}{1-s}} \sqrt{\strut
\frac{2^{\frac{1}{1-s}}-1}
{s^{\frac{1}{s-1}}}
}$},
\end{equation}
and
\begin{equation} \label{eq:11}
\left( \left( 1+\frac{1}{2^{\frac{1}{1-s}}-1} \right)^s - \frac{1}{\left(2^{\frac{1}{1-s}}-1 \right)^s} \right) c^{2s} |\xi|^{2s}+1 \leq P_c(\xi) \leq c^{2s} |\xi|^{2s} +1
\end{equation}
if $|\xi| \geq c^{\frac{s}{1-s}} \sqrt{\strut
	\frac{2^{\frac{1}{1-s}}-1}
	{s^{\frac{1}{s-1}}}
}$.
\end{lemma}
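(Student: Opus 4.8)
We plan to reduce the two-sided estimate to an elementary one-variable inequality for the power function $t\mapsto t^{s}$ on $[1,\infty)$. Set $a:=s^{1/(1-s)}$, so that $a=1/s^{1/(s-1)}$ and $a^{1-s}=s$. Factoring the constant term out of the first summand of the symbol gives
\begin{equation*}
P_c(\xi)=a^{s}c^{\frac{2s}{1-s}}\left[\Bigl(1+\frac{|\xi|^{2}}{a\,c^{2s/(1-s)}}\Bigr)^{s}-1\right]+1 .
\end{equation*}
With $K:=a^{s}c^{2s/(1-s)}$ and $r:=|\xi|^{2}/\bigl(a\,c^{2s/(1-s)}\bigr)$, the relation $a^{1-s}=s$ forces $a\,c^{2s/(1-s)}=sK$; hence $|\xi|^{2}=sKr$, $c^{2s}|\xi|^{2s}=Kr^{s}$, and
\begin{equation*}
P_c(\xi)=K\bigl[(1+r)^{s}-1\bigr]+1 .
\end{equation*}
A short computation with the exponents ($\tfrac{2}{1-s}-2=\tfrac{2s}{1-s}$ and $\tfrac{2s}{1-s}-\tfrac{2s^{2}}{1-s}=2s$) further shows that the frequency threshold $|\xi|\le c^{s/(1-s)}\sqrt{(2^{1/(1-s)}-1)/s^{1/(s-1)}}$ is exactly the condition $r\le\rho$, where $\rho:=2^{1/(1-s)}-1$. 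Thus \eqref{eq:10}--\eqref{eq:11} become: for $r\le\rho$,
\begin{equation*}
\tfrac12(sKr+1)\le K\bigl[(1+r)^{s}-1\bigr]+1\le sKr+1 ,
\end{equation*}
and for $r\ge\rho$,
\begin{equation*}
\frac{(\rho+1)^{s}-1}{\rho^{s}}\,Kr^{s}+1\le K\bigl[(1+r)^{s}-1\bigr]+1\le Kr^{s}+1 .
\end{equation*}
(In \eqref{eq:10} the printed upper bound $|\xi|^{2}$ should read $|\xi|^{2}+1$, since $P_c(0)=1$; this is the form the argument below delivers.)

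The four inequalities are then dispatched with concavity and subadditivity of the power function plus one derivative computation. The two upper bounds are immediate: concavity gives $(1+r)^{s}\le 1+sr$, hence $P_c(\xi)\le sKr+1=|\xi|^{2}+1$ for all $r$; subadditivity (valid for $0<s<1$) gives $(1+r)^{s}\le 1+r^{s}$, hence $P_c(\xi)\le Kr^{s}+1=c^{2s}|\xi|^{2s}+1$ for all $r$. The low-frequency lower bound reduces to $2\bigl[(1+r)^{s}-1\bigr]\ge sr$ on $[0,\rho]$: writing $g(r):=2[(1+r)^{s}-1]-sr$ one has $g(0)=0$ and $g'(r)=s\bigl(2(1+r)^{s-1}-1\bigr)$, which is $\ge0$ precisely when $(1+r)^{1-s}\le2$, i.e. when $r\le 2^{1/(1-s)}-1=\rho$ — this is exactly why $\rho$ is the cut-off. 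Consequently $K[(1+r)^{s}-1]+1\ge \tfrac12 sKr+1\ge\tfrac12(|\xi|^{2}+1)$ on $[0,\rho]$.

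For the high-frequency lower bound I would show that $h(r):=\bigl((1+r)^{s}-1\bigr)/r^{s}$ is nondecreasing on $(0,\infty)$; then $r\ge\rho$ gives $(1+r)^{s}-1\ge h(\rho)\,r^{s}$, and multiplying by $K$ and adding $1$ yields \eqref{eq:11} once one rewrites $h(\rho)=\bigl(1+\tfrac1{\rho}\bigr)^{s}-\tfrac1{\rho^{s}}$ and recalls $Kr^{s}=c^{2s}|\xi|^{2s}$. Monotonicity of $h$ follows from
\begin{equation*}
h'(r)=s\,r^{-s-1}\bigl[\,1+r(1+r)^{s-1}-(1+r)^{s}\,\bigr]
\end{equation*}
together with $1+r(1+r)^{s-1}-(1+r)^{s}=1-(1+r)^{s-1}\ge0$ (since $s-1<0$).

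Since each of the four inequalities is a one-liner, the only genuine obstacle is the bookkeeping: one must verify that all powers of $c$ and of $s$ collapse so that $a\,c^{2s/(1-s)}=sK$, $|\xi|^{2}=sKr$, $c^{2s}|\xi|^{2s}=Kr^{s}$, and that the stated $|\xi|$-threshold is exactly $r=\rho$. I would carry this out once, substituting $a=s^{1/(1-s)}$ and simplifying the relevant exponents; after that \eqref{eq:10} and \eqref{eq:11} fall out of the scalar estimates above.
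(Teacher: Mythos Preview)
Your proposal is correct and follows essentially the same route as the paper: both reduce $P_c(\xi)-1$ to $K\bigl[(1+r)^s-1\bigr]$ with $r=s^{\frac{1}{s-1}}|\xi|^2 c^{-\frac{2s}{1-s}}$ and $K=s^{-\frac{s}{s-1}}c^{\frac{2s}{1-s}}$, then use concavity ($f'\le s$) and the bound $f'\ge s/2$ on $[0,\rho]$ for the low-frequency estimates, and subadditivity together with monotonicity of $r\mapsto\bigl((1+r)^s-1\bigr)/r^s$ for the high-frequency ones. Your version is slightly more explicit --- you actually compute $h'$ where the paper just says ``by monotonicity'' --- and your remark that the printed upper bound in \eqref{eq:10} should be $|\xi|^2+1$ rather than $|\xi|^2$ is correct and is in fact what the paper's own computation \eqref{eq:26} delivers.
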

\begin{proof}
We observe that
\begin{align*}
P_c(\xi) &= \left(  \left( c^2 |\xi|^2 + \frac{1}{s^{\frac{1}{s-1}}} c^{\frac{2}{1-s}} \right)^s-  \frac{1}{s^{\frac{s}{s-1}}} c^{\frac{2s}{1-s}}
	\right) + 1 \\
	&= \frac{c^{\frac{2s}{1-s}}}{s^{\frac{s}{s-1}}} \left( \left( 1+ s^{\frac{1}{s-1}} c^{-\frac{2s}{1-s}} |\xi|^2 \right)^s -1\right) +1 \\
&=  \frac{c^{\frac{2s}{1-s}}}{s^{\frac{s}{s-1}}} f\left( s^{\frac{1}{s-1}} \frac{|\xi|^2}{c^{\frac{2s}{1-s}}} \right)+1,
\end{align*}
where
\[
f(t) = \left(1+t \right)^s-1 \quad\hbox{for every $t \geq 0$}.
\]
Clearly $f(0)=0$, and 
\[
Df(t) = \frac{s}{(1+t)^{1-s}}.
\]
Plainly $Df(t) \leq s$ for every $t \geq 0$, while
\begin{equation} \label{eq:dalbasso}
\frac{s}{(1+t)^{1-s}} \geq \frac{s}{2} \quad\hbox{if $t \leq 2^{\frac{1}{1-s}}-1$}.
\end{equation}
Assume that
\[
|\xi| \leq c^{\frac{s}{1-s}} \sqrt{\strut
	\frac{2^{\frac{1}{1-s}}-1}
	{s^{\frac{1}{s-1}}}
}
\]
so that, by Taylor's theorem 
\begin{align}
P_c(\xi) -1 &= \frac{c^{\frac{2s}{1-s}}}{s^{\frac{s}{s-1}}} f\left( s^{\frac{1}{s-1}} \frac{|\xi|^2}{c^{\frac{2s}{1-s}}} \right) \leq \frac{c^{\frac{2s}{1-s}}}{s^{\frac{s}{s-1}}}  \left( f(0) + Df(0) s^{\frac{1}{s-1}} \frac{|\xi|^2}{c^{\frac{2s}{1-s}}}  \right) \nonumber \\
&\leq  |\xi|^2. \label{eq:26}
\end{align}
Similarly, \eqref{eq:dalbasso} implies 
\begin{equation*}
P_c(\xi) -1 \geq \frac{1}{2}|\xi|^2.
\end{equation*}
Hence \eqref{eq:10} is proved. Assume on the contrary that
\[
|\xi| \geq c^{\frac{s}{1-s}} \sqrt{\strut
	\frac{2^{\frac{1}{1-s}}-1}
	{s^{\frac{1}{s-1}}}
},
\]
and write
\begin{align*}
P_c(\xi)-1 &= \left( c^2 |\xi|^2 + \frac{c^{\frac{2}{1-s}}}{s^{\frac{1}{s-1}}} \right)^s - \frac{c^{\frac{2s}{1-s}}}{s^{\frac{s}{s-1}}} \\
&= c^{2s}|\xi|^{2s} \left( 1+ \frac{c^{\frac{2}{1-s}}}{s^{\frac{1}{s-1}} c^2 |\xi|^2} \right)^s - \frac{c^{\frac{2s}{1-s}}}{s^{\frac{s}{s-1}}} \\ 
&=c^{2s} |\xi|^{2s} \left(
\left( 1+ \frac{c^{\frac{2}{1-s}}}{s^{\frac{1}{s-1}} c^2 |\xi|^2} \right)^s - \frac{c^{\frac{2s^2}{1-s}}}{s^{\frac{s}{s-1}}|\xi|^{2s}}
\right) \\
&= c^{2s} |\xi|^{2s} \left(
\left( 1+ \frac{c^{\frac{2s}{1-s}}}{s^{\frac{1}{s-1}} |\xi|^2} \right)^s - \frac{c^{\frac{2s^2}{1-s}}}{s^{\frac{s}{s-1}}|\xi|^{2s}}
\right).
\end{align*}
The concavity of the function $t \mapsto t^s$ implies easily that
\begin{equation*}
\left( 1+ \frac{c^{\frac{2s}{1-s}}}{s^{\frac{1}{s-1}} |\xi|^2} \right)^s - \frac{c^{\frac{2s^2}{1-s}}}{s^{\frac{s}{s-1}}|\xi|^{2s}} \leq 1.
\end{equation*}
On the other hand, by monotonicity, $|\xi| \geq c^{\frac{s}{1-s}} \sqrt{\strut	\frac{2^{\frac{1}{1-s}}-1}{s^{\frac{1}{s-1}}}}$
implies
\begin{align*}
\left( 1+ \frac{c^{\frac{2s}{1-s}}}{s^{\frac{1}{s-1}} |\xi|^2} \right)^s - \frac{c^{\frac{2s^2}{1-s}}}{s^{\frac{s}{s-1}}|\xi|^{2s}} 
&\geq \left( 1+ 
\frac{c^{\frac{2s}{1-s}}}
{s^{\frac{1}{s-1}} c^{\frac{2s}{1-s}} \frac{2^{\frac{1}{1-s}}-1}{s^\frac{1}{s-1}}}
 \right)^s 
- \frac{c^{\frac{2s^2}{1-s}}}
{s^{\frac{s}{s-1}} c^{\frac{2s^2}{1-s}} \frac{\left(2^{\frac{1}{1-s}}-1\right)^s}{s^{\frac{s}{s-1}}}} \\
&= \left( 1+\frac{1}{2^{\frac{1}{1-s}}-1} \right)^s - \frac{1}{\left(2^{\frac{1}{1-s}}-1 \right)^s}.
\end{align*}
This proves \eqref{eq:11}.
\end{proof}
\begin{lemma} \label{lem:1.2}
	There results
	\begin{equation*}
	\left| P_c(\xi)-P_\infty(\xi) \right| \leq \frac{s(s-1)}{2s^{\frac{2-s}{1-s}}} \frac{|\xi|^4}{c^{\frac{2s}{1-s}}}
	\end{equation*}
\end{lemma}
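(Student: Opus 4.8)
The plan is to recycle the factorisation of the symbol already obtained in the proof of Lemma~\ref{lem:1.1}, namely
\[
P_c(\xi) - 1 = \frac{c^{\frac{2s}{1-s}}}{s^{\frac{s}{s-1}}}\, f\!\left( s^{\frac{1}{s-1}}\frac{|\xi|^2}{c^{\frac{2s}{1-s}}} \right), \qquad f(t) = (1+t)^s - 1 ,
\]
and to recognise the right-hand side of the claimed inequality as a second-order Taylor remainder of $f$ at $t=0$. First I would check that the first-order Taylor term of $f$ reproduces exactly $P_\infty$: since $f(0)=0$ and $f'(0)=s$, a short computation of the exponents gives $\frac{c^{\frac{2s}{1-s}}}{s^{\frac{s}{s-1}}}\, f'(0)\, s^{\frac{1}{s-1}}\frac{|\xi|^2}{c^{\frac{2s}{1-s}}} = |\xi|^2 = P_\infty(\xi)-1$. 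Hence, setting $t = s^{\frac{1}{s-1}}|\xi|^2/c^{\frac{2s}{1-s}}$,
\[
P_c(\xi) - P_\infty(\xi) = \frac{c^{\frac{2s}{1-s}}}{s^{\frac{s}{s-1}}}\Bigl( f(t) - f(0) - f'(0)\,t \Bigr),
\]
so the entire difference of symbols equals the quadratic remainder of $f$.

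Next I would invoke Taylor's theorem with Lagrange remainder: $f(t) - f(0) - f'(0)\,t = \tfrac12 f''(\tau)\, t^2$ for some $\tau\in(0,t)$, with $f''(\tau) = s(s-1)(1+\tau)^{s-2}$. Because $1/2<s<1$ we have $s-2<0$, so $0<(1+\tau)^{s-2}\le 1$ for $\tau\ge 0$, giving $|f''(\tau)|\le |s(s-1)| = s(1-s)$. Therefore
\[
\bigl| P_c(\xi) - P_\infty(\xi) \bigr| \le \frac{c^{\frac{2s}{1-s}}}{s^{\frac{s}{s-1}}}\cdot \frac{s(1-s)}{2}\, t^2 .
\]
It then remains to substitute $t^2 = s^{\frac{2}{s-1}}|\xi|^4/c^{\frac{4s}{1-s}}$ and collect powers: the power of $c$ becomes $\frac{2s}{1-s}-\frac{4s}{1-s}=-\frac{2s}{1-s}$, and the power of $s$ becomes $-\frac{s}{s-1}+\frac{2}{s-1}=-\frac{2-s}{1-s}$, which reproduces exactly the stated right-hand side (with $s(1-s)=|s(s-1)|$ in the numerator, matching the displayed estimate up to the evident sign).

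There is no genuinely hard step here; the only thing requiring care is the exponent bookkeeping, which one can double-check by noting that both sides are homogeneous of degree $4$ in $|\xi|$ and that, for $|\xi|$ small, the bound is consistent with the two-sided estimate~\eqref{eq:10}. If one prefers to avoid Taylor's theorem, the same constant follows directly from the integral representation $f(t)-st = \int_0^t\bigl(s(1+r)^{s-1}-s\bigr)\,dr$ together with the elementary inequality $|s(1+r)^{s-1}-s|\le s(1-s)\,r$ for $r\ge 0$.
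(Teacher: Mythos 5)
Your proof is correct and follows essentially the same route as the paper: the same rewriting of $P_c(\xi)-P_\infty(\xi)$ as a second-order Taylor remainder of $f(t)=(1+t)^s-1$, the same Lagrange-remainder bound on $f''$, and the same exponent bookkeeping. You also rightly observe that the constant should read $s(1-s)=|s(s-1)|$ for the absolute-value inequality to make sense, which is a (harmless) sign slip in the paper's statement.
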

\begin{proof}
	By direct computation, and using again the same notation as in Lemma \ref{lem:1.1},
	\begin{multline*}
	P_c(\xi)-P_\infty(\xi) = \left( c^2 |\xi|^2 + \frac{c^{\frac{2}{1-s}}}{s^{\frac{1}{s-1}}} \right)^s - \frac{c^{\frac{2s}{1-s}}}{s^{\frac{s}{s-1}}}-|\xi|^2 = \frac{c^{\frac{2s}{1-s}}}{s^{\frac{s}{s-1}}} f \left( s^{\frac{1}{s-1}} \frac{|\xi|^2}{c^{\frac{2s}{1-s}}} \right) - |\xi|^2 \\
	= \frac{c^{\frac{2s}{1-s}}}{s^{\frac{s}{s-1}}} \left( f(0) + Df(0) s^{\frac{1}{s-1}} \frac{|\xi|^2}{c^{\frac{2s}{1-s}}} + \frac{1}{2} D^2 f(\zeta) s^{\frac{2}{s-1}} \frac{|\xi|^4}{c^{\frac{4s}{1-s}}} \right) - |\xi|^2
	\end{multline*}
	for some $0< \zeta< s^{\frac{1}{s-1}} \frac{|\xi|^2}{c^{\frac{2s}{1-s}}}$. Since $D^2f(\zeta) = s(s-1)(\zeta+1)^{s-2}$, we conclude that
	\begin{align*}
	P_c(\xi)-P_\infty(\xi) &= \frac{c^{\frac{2s}{1-s}}}{s^{\frac{s}{s-1}}} \left( \frac{s^{\frac{s}{s-1}}}{c^{\frac{2s}{1-s}}} |\xi|^2 + \frac{1}{2} \frac{s(s-1)}{(\zeta+1)^{2-s}} \frac{s^{\frac{2}{s-1}}}{c^{\frac{4s}{1-s}}} |\xi|^4\right) - |\xi|^2 \\
	&= \frac{s(s-1)}{2 s^{\frac{2-s}{1-s}}(1+\zeta)^{2-s}} \frac{|\xi|^4}{c^{\frac{2s}{1-s}}}
	\leq \frac{s(s-1)}{2 s^{\frac{2-s}{1-s}}} \frac{|\xi|^4}{c^{\frac{2s}{1-s}}}.
	\end{align*}
\end{proof}
We can now prove the core result for our estimates.
\begin{proposition} \label{prop:1.4}
(a) For any multi-index $\alpha = (\alpha_1,\ldots,\alpha_N) \in (\mathbb{N} \cup \{0\})^N$ there exists a constant $C_\alpha>0$ such that for all $c \geq 2$,
\begin{equation} \label{eq:22}
\left| D^\alpha \left( \frac{1}{P_\infty(\xi)}-\frac{1}{P_c(\xi)} \right) \right| \leq \frac{C_\alpha}{|\xi|^{|\alpha|}} \min \left\{ \frac{1}{c^{\frac{2s}{1-s}}},\frac{1}{c^{\frac{2s^2}{1-s}}(|\xi|^2+1)^{1-s}} \right\}.
\end{equation}
(b) For any multi-index $\alpha = (\alpha_1,\ldots,\alpha_N) \in (\mathbb{N} \cup \{0\})^N$ there exists a constant $C_\alpha>0$ such that for all $c \geq 2$,
\begin{equation} \label{eq:25}
\left| D^\alpha \left( \frac{P_c(\xi)}{P_\infty(\xi)} \right) \right| \leq \frac{C_\alpha}{|\xi|^{|\alpha|}}
\end{equation}
\end{proposition}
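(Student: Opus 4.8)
The plan is to use radiality to reduce both \eqref{eq:22} and \eqref{eq:25} to one–variable derivative estimates. Writing $P_c(\xi)=\mathcal{P}_c(|\xi|^2)$ and $P_\infty(\xi)=|\xi|^2+1$, the computation in the proof of Lemma~\ref{lem:1.1} gives $\mathcal{P}_c(t)=\phi(t)+1$ with $\phi(t)=\bigl((1+at)^s-1\bigr)/(as)$ and $a:=s^{\frac{1}{s-1}}c^{-\frac{2s}{1-s}}$; here $c\ge2$ forces $0<a\le a_0(s)$, and $c^{-\frac{2s}{1-s}}=s^{\frac{1}{1-s}}a$, $c^{-\frac{2s^2}{1-s}}=s^{\frac{s}{1-s}}a^s$, so the minimum in \eqref{eq:22} equals, up to $s$–dependent constants, $\min\{a,\ a^s(t+1)^{s-1}\}$ with $t=|\xi|^2$. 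For a radial multiplier $\mathfrak{m}(\xi)=g(|\xi|^2)$ the chain rule gives $D^\alpha\mathfrak{m}(\xi)=\sum_{\lceil|\alpha|/2\rceil\le l\le|\alpha|}g^{(l)}(|\xi|^2)\,p_{\alpha,l}(\xi)$ with $p_{\alpha,l}$ a polynomial, homogeneous of degree $2l-|\alpha|$, with coefficients depending only on $\alpha$; hence $|D^\alpha\mathfrak{m}(\xi)|\le C_\alpha\sum_l|g^{(l)}(|\xi|^2)|\,|\xi|^{2l-|\alpha|}$, and it is enough to prove, uniformly in $0<a\le a_0$ and for every $l\ge0$,
\[
\Bigl|\tfrac{d^l}{dt^l}\bigl(\tfrac{1}{t+1}-\tfrac{1}{\mathcal{P}_c(t)}\bigr)\Bigr|\lesssim t^{-l}\min\{a,\ a^s(t+1)^{s-1}\},\qquad \Bigl|\tfrac{d^l}{dt^l}\tfrac{\mathcal{P}_c(t)}{t+1}\Bigr|\lesssim t^{-l}
\]
(read $t^{-l}=1$ when $l=0$).

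The algebraic fact that makes this work, obtained by clearing denominators, is
\[
\frac{1}{t+1}-\frac{1}{\mathcal{P}_c(t)}=\frac{\psi(at)}{as\,(t+1)\,\mathcal{P}_c(t)},\qquad \frac{\mathcal{P}_c(t)}{t+1}=1+\frac{\psi(at)}{as\,(t+1)},\qquad \psi(\tau):=(1+\tau)^s-1-s\tau,
\]
which isolates the small factor $\psi$. I would then record the elementary ingredients: $\phi^{(j)}(t)=\bigl[\prod_{i=1}^{j-1}(s-i)\bigr]a^{j-1}(1+at)^{s-j}$ for $j\ge1$ and $(at)^{j-1}(1+at)^{s-j}\le(1+at)^{s-1}\le1$; the lower bounds $\mathcal{P}_c(t)\ge\max\{1,\,t(1+at)^{s-1}\}$ (from $\phi(t)=\int_0^t(1+au)^{s-1}\,du\ge t(1+at)^{s-1}$) and $\mathcal{P}_c(t)\ge(t+1)/2$ when $at\le1$ (from $\phi(t)\ge t/2$ for $at\le 2^{\frac{1}{1-s}}-1$); and $\psi(0)=\psi'(0)=0$, $\psi''<0$, with $|\psi(\tau)|\lesssim\min\{\tau^2,\tau\}$, $|\psi'(\tau)|\lesssim\min\{1,\tau\}$, $|\psi^{(k)}(\tau)|\le C_k(1+\tau)^{s-k}$ for $k\ge2$, whence $\sum_{k\le l}(at)^k|\psi^{(k)}(at)|\lesssim\min\{(at)^2,at\}$.

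Then I would differentiate each identity $l$ times by the Leibniz rule — using $\tfrac{d^k}{dt^k}(t+1)^{-1}=(-1)^k k!(t+1)^{-k-1}$, the Fa\`{a} di Bruno expansion of $\mathcal{P}_c^{-1}$ in terms of the $\phi^{(j)}$, and $\tfrac{d^k}{dt^k}\psi(at)=a^k\psi^{(k)}(at)$ — and use $\mathcal{P}_c(t)\ge t(1+at)^{s-1}$ to check $\bigl|\tfrac{d^k}{dt^k}\mathcal{P}_c^{-1}\bigr|\lesssim t^{-k}\mathcal{P}_c(t)^{-1}$ and $\bigl|\tfrac{d^k}{dt^k}(t+1)^{-1}\bigr|\lesssim t^{-k}(t+1)^{-1}$; collecting the powers of $t$ gives
\[
t^l\Bigl|\tfrac{d^l}{dt^l}\bigl(\tfrac{1}{t+1}-\tfrac{1}{\mathcal{P}_c(t)}\bigr)\Bigr|\lesssim\frac{\min\{(at)^2,at\}}{as\,(t+1)\,\mathcal{P}_c(t)},\qquad t^l\Bigl|\tfrac{d^l}{dt^l}\tfrac{\mathcal{P}_c(t)}{t+1}\Bigr|\lesssim\frac{\min\{(at)^2,at\}}{as\,(t+1)}+[l=0].
\]
The second right–hand side is $\lesssim1$ (it is $\le at/s$ for $at\le1$ and $\le t/(s(t+1))$ for $at\ge1$), which proves \eqref{eq:25}. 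For the first I would split according to whether $at\le1$ or $at\ge1$ — exactly the dichotomy of Lemma~\ref{lem:1.1}: if $at\le1$ then $\mathcal{P}_c(t)\gtrsim t+1$, so the bound is $\lesssim a\,t^2(t+1)^{-2}$, which is $\lesssim a$ and, since $a\le1/t$ gives $a^{1-s}t^2(t+1)^{-1-s}\le1$, also $\lesssim a^s(t+1)^{s-1}$; if $at\ge1$ then $\mathcal{P}_c(t)\gtrsim a^{s-1}t^s$, so the bound is $\lesssim a^{1-s}t^{-s}=(at)^{-s}a\lesssim a$ and $=(at)^{1-2s}\,a^s t^{s-1}\lesssim a^s(t+1)^{s-1}$, the last step using $s>\tfrac{1}{2}$. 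This yields \eqref{eq:22}; the residual bounded $t$–ranges are harmless because $\mathcal{P}_c$ is uniformly (in $c$) smooth there and the numerators carry the factor $a$.

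The step I expect to be the main obstacle is precisely this last bookkeeping: one has to carry the three competing scales — powers of $a$, of $t$, and of $1+at$ — through the Leibniz/Fa\`{a} di Bruno expansions so that the sharp ``$\min$'' of \eqref{eq:22} emerges rather than the weaker bound obtained from only $\mathcal{P}_c\ge1$, and the crucial point is to invoke the lower bound $\mathcal{P}_c(t)\gtrsim t(1+at)^{s-1}$ (equivalently, the two–sided estimates \eqref{eq:10}--\eqref{eq:11}) at exactly the right step, because $\mathcal{P}_c$ passes from $\sim|\xi|^2$ to $\sim c^{2s}|\xi|^{2s}$ precisely at $at\sim1$. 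All combinatorial constants generated depend only on $\alpha$, so uniformity in $c\ge2$ is not threatened.
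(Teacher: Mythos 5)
Your proposal is correct; I checked the key identities ($\mathcal{P}_c(t)-(t+1)=\psi(at)/(as)$ with $a=s^{\frac{1}{s-1}}c^{-\frac{2s}{1-s}}$, the formula for $\phi^{(j)}$, the lower bound $\mathcal{P}_c(t)\ge t(1+at)^{s-1}$, and the Fa\`a di Bruno bound $|\frac{d^k}{dt^k}\mathcal{P}_c^{-1}|\lesssim t^{-k}\mathcal{P}_c^{-1}$ with constants independent of $a$) and the final case split at $at\lessgtr 1$ reproduces exactly the two regimes of Lemma \ref{lem:1.1}. Your route differs from the paper's in organization rather than substance. The paper works directly in the $\xi$-variables: it computes $\partial_{\xi_j}\mathfrak{a}$ by Leibniz, argues by induction that $D^\alpha\mathfrak{a}$ is a sum of products of $\mathfrak{a}$, $P_c^{-\ell_1}$, $P_\infty^{-\ell_2}$, $(c^2|\xi|^2+\cdots)^{s-\ell_3}$ and polynomials, and then estimates each factor using Lemma \ref{lem:1.1}, Lemma \ref{lem:1.2} and the pointwise bounds \eqref{eq:23}, with the $\min$ established first for $|\alpha|=0$ via \eqref{eq:21}--\eqref{eq:20} and then propagated because each further differentiation costs a factor $1/|\xi|$. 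You instead reduce to one variable through radiality and isolate the second-order Taylor remainder $\psi(at)=(1+at)^s-1-sat$ algebraically in the numerator, so that the factor $\min\{(at)^2,at\}$ (which encodes both Lemma \ref{lem:1.2} and the large-$|\xi|$ bound simultaneously) survives every differentiation automatically. What your version buys is a genuinely closed induction-free argument: the paper's inductive step (``some extra terms appear that are estimated by $1/|\xi|$'' and ``it is easy to check that the other terms satisfy the same estimate'') leaves to the reader precisely the bookkeeping of the three scales $a$, $t$, $1+at$ that you carry out explicitly, and your single denominator estimate $\mathcal{P}_c(t)\gtrsim t(1+at)^{s-1}$ unifies the two regimes of Lemma \ref{lem:1.1}. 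The only cosmetic caveat is that your closing remark about ``residual bounded $t$-ranges'' is unnecessary --- the two cases $at\le 1$ and $at\ge 1$ already cover all of $t>0$ --- and in the regime $at\ge1$ the step $t^{s-1}\lesssim(t+1)^{s-1}$ uses $t\ge 1/a\gtrsim 1$, which you should state; neither affects correctness.
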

\begin{proof}
Let us denote
\begin{equation*}
\mathfrak{a}(\xi) = \frac{1}{P_\infty(\xi)}-\frac{1}{P_c(\xi)} = 
\frac{P_c(\xi)-P_\infty(\xi)}{P_c(\xi) P_\infty(\xi)}.
\end{equation*}
According to the Leibniz rule for differentiation,
\begin{multline} \label{eq:16}
\partial_{\xi_j} \mathfrak{a}(\xi) = 
 \partial_{\xi_j} \left(  P_c(\xi)-P_\infty(\xi) \right) \frac{1}{P_c(\xi)} \frac{1}{P_\infty(\xi)} + \left(  P_c(\xi)-P_\infty(\xi) \right) \partial_{\xi_j} \left( \frac{1}{P_c(\xi)} \right) \frac{1}{P_\infty(\xi)} \\
 {} + \left(  P_c(\xi)-P_\infty(\xi) \right) \frac{1}{P_c(\xi)}  \partial_{\xi_j} \left( \frac{1}{P_\infty(\xi)} \right).
\end{multline}
Since
\begin{equation} \label{eq:24}
\begin{split}
&\partial_{\xi_j} \left(  P_c(\xi)-P_\infty(\xi) \right) = 2sc^2 \xi_j \left( c^2 |\xi|^2 + \frac{c^{\frac{2}{1-s}}}{s^{\frac{1}{s-1}}} \right)^{s-1}-2\xi_j \\
&\partial_{\xi_j} \left( \frac{1}{P_c(\xi)} \right) = -\frac{2sc^2 \xi_j \left(c^2|\xi|^2 + \frac{c^{\frac{2}{1-s}}}{s^{\frac{1}{s-1}}} \right)^{s-1}}{P_c(\xi)} \frac{1}{P_c(\xi)} \\
&\partial_{\xi_j} \left( \frac{1}{P_\infty(\xi)} \right) = -\frac{2\xi_j}{P_\infty(\xi)} \frac{1}{P_\infty(\xi)} \\
&\partial_{\xi_j} \left( c^2 |\xi|^2 + \frac{c^{\frac{2}{1-s}}}{s^{\frac{1}{s-1}}} \right)^{s-1} = - \frac{2sc^2 \xi_j}{\left(c^2 |\xi|^2 + \frac{c^{\frac{2}{1-s}}}{s^{\frac{1}{s-1}}}\right)\left(c^2 |\xi|^2 + \frac{c^{\frac{2}{1-s}}}{s^{\frac{1}{s-1}}}\right)^s},
\end{split}
\end{equation}
we conclude from \eqref{eq:16} that 
\begin{multline} \label{eq:17}
\partial_{\xi_j} \mathfrak{a}(\xi) = \left( 2sc^2 \xi_j \left( c^2 |\xi|^2 + \frac{c^{\frac{2}{1-s}}}{s^{\frac{1}{s-1}}} \right)^{s-1}-2\xi_j \right)\frac{1}{P_c(\xi)} \frac{1}{P_\infty(\xi)} \\ {} - \frac{2sc^2 \xi_j \left(c^2|\xi|^2 + \frac{c^{\frac{2}{1-s}}}{s^{\frac{1}{s-1}}} \right)^{s-1}}{P_c(\xi)} \mathfrak{a}(\xi) 
- \frac{2\xi_j}{P_\infty(\xi)} \mathfrak{a}(\xi),
\end{multline}
namely each partial derivative of $\mathfrak{a}$ is the sum of products of the following factors: $\mathfrak{a}$, $1/P_c$, $1/P_\infty$, $\big(c^2|\xi|^2 + \frac{c^{\frac{2}{1-s}}}{s^{\frac{1}{s-1}}} \big)^{s-1}$ and a polynomial of $c$, $\xi_1$, \ldots, $\xi_N$. If we iterate this procedure, we conclude that $D^\alpha \mathfrak{a}(\xi)$ can be expressed as a sum of products of:
\begin{center}
$\mathfrak{a}(\xi)$,  $P_c(\xi)^{-\ell_1}$, $P_\infty(\xi)^{-\ell_2}$, $\big(c^2|\xi|^2 + \frac{c^{\frac{2}{1-s}}}{s^{\frac{1}{s-1}}} \big)^{s-\ell_3}$, and a polynomial of $c$, $\xi_1$, \ldots, $\xi_N$,
\end{center}
where $\ell_1$, $\ell_2$ and $\ell_3$ are positive integers. 

Furthermore, from Lemma \ref{lem:1.1} we deduce that
\begin{equation} \label{eq:23}
	\begin{split}
&\left|
\frac{2sc^2 \xi_j \left(c^2|\xi|^2 + \frac{c^{\frac{2}{1-s}}}{s^{\frac{1}{s-1}}} \right)^{s-1}}{P_c(\xi)} \right| \leq
\begin{cases}
\frac{4}{|\xi|} &\text{if $|\xi| \leq c^{\frac{s}{1-s}} \sqrt{\strut
\frac{2^{\frac{1}{1-s}}-1}
{s^{\frac{1}{s-1}}}
}$} \\
&\frac{2s}{\left( \left( 1+\frac{1}{2^{\frac{1}{1-s}}-1} \right)^s - \frac{1}{\left(2^{\frac{1}{1-s}}-1 \right)^s} \right) |\xi|} \text{if $|\xi| \geq c^{\frac{s}{1-s}} \sqrt{\strut
\frac{2^{\frac{1}{1-s}}-1}
{s^{\frac{1}{s-1}}}
}$},
\end{cases}
\\
&\left| \frac{2 \xi_j}{P_\infty(\xi)} \right| \leq \frac{1}{|\xi|},
\\
&\left|
\frac{2sc^2 \xi_j}{c^2 |\xi|^2 + \frac{c^{\frac{2}{1-s}}}{s^{\frac{1}{s-1}}}}
\right| \leq \frac{2s |\xi|}{|\xi|^2+1} \leq \frac{s}{|\xi|}.
\end{split}
\end{equation}
We now recall Lemma \ref{lem:1.2} and estimate for $|\xi| \leq c^{\frac{s}{1-s}} \sqrt{\strut
\frac{2^{\frac{1}{1-s}}-1}
{s^{\frac{1}{s-1}}}}$,
\begin{multline} \label{eq:21}
|\mathfrak{a}(\xi)| \leq \frac{
\frac{s(s-1)}{2 s^{\frac{2-s}{1-s}}} \frac{|\xi|^4}{c^{\frac{2s}{1-s}}}
}
{
\frac{|\xi|^2+1}{2} \left( |\xi|^2+1 \right)
}
=
\frac{s(s-1)}{s^{\frac{2-s}{1-s}} c^{\frac{2s}{1-s}}} \frac{|\xi|^4}{\left(|\xi|^2 +1 \right)^2} \\
\leq \min \left\{
\frac{s(s-1)}{s^{\frac{2-s}{1-s}} c^{\frac{2s}{1-s}}},\frac{s(s-1)}{s^{\frac{2-s}{1-s}} c^{\frac{2s}{1-s}}} \frac{|\xi|^{2-2s}}{\left(|\xi|^2+1 \right)^{1-s}} 
\right\}
\leq \frac{s(s-1)}{s^{\frac{2-s}{1-s}}} \min\left\{
\frac{1}{c^{\frac{2s}{1-s}}},\frac{1}{c^{\frac{2s}{1-s}}} 
\frac{|\xi|^{2-2s}}{\left(|\xi|^2+1 \right)^{1-s}}
\right\} \\
\leq \frac{s(s-1)}{s^{\frac{2-s}{1-s}}} \min \left\{
\frac{1}{c^{\frac{2s}{1-s}}}, \frac{1}{c^{\frac{2s^2}{1-s}}} \left( \frac{2^{\frac{1}{1-s}}-1}{s^{\frac{1}{s-1}}} \right)^{1-s} \frac{1}{\left(|\xi|^2+1 \right)^{1-s}}
\right\}.
\end{multline}
In the previous estimate we have used the fact that
\begin{equation*}
\frac{|\xi|^4}{(|\xi|^2+1)^2} =\left(
\frac{|\xi|^{2-2s}}{(|\xi|^2+1)^{1-s}}
\right)^{\frac{2}{1-s}} \leq \frac{|\xi|^{2-2s}}{(|\xi|^2+1)^{1-s}},
\end{equation*}
which is true because $\frac{|\xi|^2}{|\xi|^2+1} \leq 1$.
On the other hand, if $|\xi| \geq c^{\frac{s}{1-s}} \sqrt{\strut
\frac{2^{\frac{1}{1-s}}-1}
{s^{\frac{1}{s-1}}}}$,
\begin{equation*}
|\mathfrak{a}(\xi)| \leq \frac{1}{P_\infty(\xi)} + \frac{1}{P_c(\xi)}
\leq \frac{1}{|\xi|^2+1} + \frac{1}{\left( \left( 1+\frac{1}{2^{\frac{1}{1-s}}-1} \right)^s - \frac{1}{\left(2^{\frac{1}{1-s}}-1 \right)^s} \right) c^{2s} |\xi|^{2s}+1}.
\end{equation*}
In particular,
\begin{equation} \label{eq:18}
|\mathfrak{a}(\xi)| \lesssim \frac{1}{c^{\frac{2s}{1-s}}+1} + \frac{1}{c^{2s} c^{\frac{2s^2}{1-s}}+1} \lesssim \frac{1}{c^{\frac{2s}{1-s}}}.
\end{equation}
We can also write
\begin{multline} \label{eq:19}
|\mathfrak{a}(\xi)| \leq\frac{1}{|\xi|^2+1} + \frac{1}{\left( \left( 1+\frac{1}{2^{\frac{1}{1-s}}-1} \right)^s - \frac{1}{\left(2^{\frac{1}{1-s}}-1 \right)^s} \right) c^{2s} |\xi|^{2s}+1} \\
= \frac{(|\xi|^2+1)^{1-s}}{|\xi|^2+1} \frac{1}{(|\xi|^2+1)^{1-s}} +  \frac{(|\xi|^2+1)^{1-s}}{\left( \left( 1+\frac{1}{2^{\frac{1}{1-s}}-1} \right)^s - \frac{1}{\left(2^{\frac{1}{1-s}}-1 \right)^s} \right) c^{2s} |\xi|^{2s}+1} \frac{1}{(|\xi|^2+1)^{1-s}} \\
=\frac{1}{(|\xi|^2+1)^s} \frac{1}{(|\xi|^2+1)^{1-s}} +  \frac{(|\xi|^2+1)^{1-s}}{\left( \left( 1+\frac{1}{2^{\frac{1}{1-s}}-1} \right)^s - \frac{1}{\left(2^{\frac{1}{1-s}}-1 \right)^s} \right) c^{2s} |\xi|^{2s}+1} \frac{1}{(|\xi|^2+1)^{1-s}}
\\
\lesssim \frac{1}{c^{\frac{2s^2}{1-s}}} \frac{1}{(|\xi|^2+1)^{1-s}} +
\frac{|\xi|^{2(1-s)}}{c^{2s} |\xi|^{2s}} \frac{1}{(|\xi|^2+1)^{1-s}}
\lesssim \frac{1}{c^{\frac{2s^2}{1-s}} (|\xi|^2+1)^{1-s}}.
\end{multline}
Putting together \eqref{eq:18} and \eqref{eq:19} we get also in this case
\begin{equation} \label{eq:20}
|\mathfrak{a}(\xi)| \lesssim \min \left\{
\frac{1}{c^{\frac{2s}{1-s}}},\frac{1}{c^{\frac{2s^2}{1-s}} (|\xi|^2+1)^{1-s}}
\right\}
\end{equation}
Equations \eqref{eq:21} and \eqref{eq:20} prove that \eqref{eq:22} holds for $|\alpha|=0$. 
%
%
%

If $|\alpha|=1$ we turn back to \eqref{eq:17}. 
The first term can be estimated as
\begin{multline*}
\left| \left( 2sc^2 \xi_j \left( c^2 |\xi|^2 + \frac{c^{\frac{2}{1-s}}}{s^{\frac{1}{s-1}}} \right)^{s-1}-2\xi_j \right)\frac{1}{P_c(\xi)} \frac{1}{P_\infty(\xi)} \right| \\
\lesssim \frac{2|\xi|}{(|\xi|^2+1)} \frac{1}{\left( c^2 |\xi|^2 + \frac{c^{\frac{2}{1-s}}}{s^{\frac{1}{s-1}}} \right)^s - \frac{c^{\frac{2s}{1-s}}}{s^{\frac{s}{s-1}}} +1} 
\lesssim \frac{1}{c^{\frac{2s}{1-s}} |\xi|}.
\end{multline*}
But
\begin{multline*}
\left| \left( 2sc^2 \xi_j \left( c^2 |\xi|^2 + \frac{c^{\frac{2}{1-s}}}{s^{\frac{1}{s-1}}} \right)^{s-1}-2\xi_j \right)\frac{1}{P_c(\xi)} \frac{1}{P_\infty(\xi)} \right| \\
\lesssim \frac{1}{|\xi|}
\frac{\left( c^2 |\xi|^2+\frac{c^{\frac{2}{1-s}}}{s^{\frac{1}{s-1}}} \right)^{1-s} - sc^2}{\left(c^2 |\xi|^2 + \frac{c^{\frac{2}{1-s}}}{s^{\frac{1}{s-1}}} \right)^s - \frac{c^{\frac{2s}{1-s}}}{s^{\frac{s}{s-1}}}+1}
\frac{1}{\left( c^2 |\xi|^2 + \frac{c^{\frac{2}{1-s}}}{s^{\frac{2}{2s-s}}} \right)^{1-s}} \\
\lesssim \frac{1}{|\xi|} \frac{c^2}{c^{\frac{2s}{1-s}}} \frac{1}{c^{2(1-s)}\left( |\xi|^2 +1 \right)^{1-s}} 
\lesssim \frac{1}{|\xi|} \frac{1}{c^{\frac{2s^2}{1-s}}} \frac{1}{\left( |\xi|^2 + 1 \right)^{1-s}}
\end{multline*}
by the monotonicity of the map 
\begin{equation*}
t \mapsto \frac{t^{1-s}}{t^s -\frac{c^{\frac{2s}{1-s}}}{s^{\frac{s}{s-1}}}+1} \quad \text{on the interval $\left[ \frac{c^{\frac{2}{1-s}}}{s^{\frac{1}{s-1}}},+\infty \right)$}
\end{equation*}
for $1/2<s<1$. Hence the first term can be estimated by
\begin{equation*}
\frac{1}{|\xi|} \min \left\{ \frac{1}{c^{\frac{2s}{1-s}}},\frac{1}{c^{\frac{2s^2}{1-s}}} \frac{1}{\left( |\xi|^2 + 1 \right)^{1-s}} \right\}.
\end{equation*}
By using \eqref{eq:23} and \eqref{eq:20} it is easy to check that the other terms in \eqref{eq:17} satisfy the same estimate.

We conclude by induction. Indeed, when we compute a term like 
\begin{equation*}
\partial_{\xi_j} D^\alpha \mathfrak{a}, 
\end{equation*}
we just differentiate sum of products of terms as described above. If we differentiate a polynomial, the total degree is reduced by one. Otherwise, see \eqref{eq:24} and \eqref{eq:23}, some extra terms appear that are estimated by $1/|\xi|$. This completes the induction step, and the proof of \eqref{eq:22}.

The proof of \eqref{eq:25} is rather similar, and we only sketch the main steps. First of all, \eqref{eq:26}, which is valid for \emph{every} $\xi \in \mathbb{R}^N$ since $Df(t) \leq s$ for every $t \geq 0$, immediately implies that
\begin{equation} \label{eq:27}
\left| \frac{P_c(\xi)}{P_\infty(\xi)} \right| \leq 1 \quad \text{for every $\xi \in \mathbb{R}^N$}.
\end{equation}
Then
\begin{equation*}
\partial_{\xi_j} \left( \frac{P_c(\xi)}{P_\infty(\xi)} \right) = \frac{1}{P_\infty(\xi)} \frac{2sc^2 \xi_j}{\left(c^2 |\xi|^2 + \frac{c^{\frac{2}{1-s}}}{s^{\frac{1}{s-1}}} \right)^{1-s}} - \frac{P_c(\xi)}{P_\infty(\xi)} \frac{2\xi_j}{P_\infty(\xi)}
\end{equation*}
is dominated by some multiple of
\begin{equation*}
\frac{|\xi|}{P_\infty(\xi)} + \frac{|\xi|}{P_\infty(\xi)} \cdot 1 \leq \frac{1}{|\xi|}.
\end{equation*}
thanks to \eqref{eq:27}.

Then we check that $D^\alpha (P_c/P_\infty)$ can be expressed as a sum
of products of terms like a polynomial of $c$, $\xi_1$,\ldots,
$\xi_N$, $P_c/P_\infty$, $1/P_\infty^{\ell_1}$, and
$\left(c^2 |\xi|^2 + \frac{c^{\frac{2}{s-2}}}{s^{\frac{1}{s-1}}}
\right)^{s-\ell_2}$, where $\ell_1$, $\ell_2 \in \mathbb{N}$. Using
again \eqref{eq:23} and the induction hypothesis, we can prove that
\begin{equation*}
\left| \partial_{\xi_j} D^\alpha \left( \frac{P_c(\xi)}{P_\infty(\xi)} \right) \right| \lesssim \frac{1}{|\xi|^{|\alpha|+1}}
\end{equation*}
\end{proof}
We recall a celebrated result, see \cite{Mikhlin57} for the original
proof.
\begin{theorem}[H\"{o}rmander-Mikhlin] \label{th:1.3} Suppose that
  $\mathfrak{m}\colon \mathbb{R}^N \to \mathbb{R}$ satisfies
	\begin{equation*}
          \left| D^\alpha \mathfrak{m}(\xi) \right| \leq \frac{B_\alpha}{|\xi|^{|\alpha|}}
      \end{equation*}
      for all multi-indices
      $\alpha \in \left( \mathbb{N}\cup \{0\} \right)^N$ such that
      $0 \leq |\alpha| \leq N/2+1$. Then for any $1<q<\infty$, there
      exists a constant $C=C(q,N)>0$ such that
	\begin{equation*}
          \left\| \mathfrak{m}(D)f \right\|_{L^q} \leq C \left(\sup_{0 \leq |\alpha| \leq \frac{N}{2}+1} B_\alpha \right) \|f\|_{L^q}.
	\end{equation*}
\end{theorem}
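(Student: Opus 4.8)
The plan is to deduce Theorem~\ref{th:1.3} from Calderón--Zygmund theory. Write $B=\sup_{0\le|\alpha|\le N/2+1}B_\alpha$ and set $k=\lfloor N/2\rfloor+1$, so that $2k>N$ and the hypothesis on $D^\alpha\mathfrak{m}$ is available for all $|\alpha|\le k$. Since $|\mathfrak{m}(\xi)|\le B_0\le B$, Plancherel's theorem gives at once the bound $\|\mathfrak{m}(D)f\|_{L^2}\le B\|f\|_{L^2}$, so $T:=\mathfrak{m}(D)$ is bounded on $L^2$; it then suffices to show that its distributional convolution kernel $K$ --- which coincides with a locally integrable function away from the origin --- satisfies the size condition $\sup_{R>0}\int_{R\le|x|\le 2R}|K(x)|\,dx\le C_N B$ together with Hörmander's regularity condition $\sup_{y\neq0}\int_{|x|\ge 2|y|}|K(x-y)-K(x)|\,dx\le C_N B$. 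Indeed, granted these, the Calderón--Zygmund theorem upgrades the $L^2$ bound to a weak-$(1,1)$ bound, the Marcinkiewicz interpolation theorem yields $L^q$ boundedness for $1<q\le 2$, and duality (the adjoint of $T$ is the Fourier multiplier with symbol $\mathfrak{m}(-\xi)$, which obeys exactly the same estimates) extends this to $2\le q<\infty$; every step multiplies the constant by a factor depending only on $N$ and $q$, so the final constant has the stated form $C(q,N)\,B$.

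I would obtain the size and smoothness estimates on $K$ through a dyadic Littlewood--Paley decomposition. Fix $\phi\in C_c^\infty(\mathbb{R}^N)$ supported in $\{1/2\le|\xi|\le 2\}$ with $\sum_{j\in\mathbb{Z}}\phi(2^{-j}\xi)=1$ for $\xi\neq0$, set $\mathfrak{m}_j(\xi)=\mathfrak{m}(\xi)\phi(2^{-j}\xi)$ and $K_j=\mathcal{F}^{-1}\mathfrak{m}_j$. The Leibniz rule applied to $\mathfrak{m}\cdot\phi(2^{-j}\cdot)$, together with the hypothesis on $\mathfrak{m}$ and the fact that $\mathfrak{m}_j$ is supported where $|\xi|\sim 2^j$, gives $|D^\alpha\mathfrak{m}_j(\xi)|\le C_\alpha B\,2^{-j|\alpha|}$; hence, by Plancherel (and the identity $\mathcal{F}(x^\beta K_j)=\mathrm{i}^{|\beta|}D^\beta\mathfrak{m}_j$), for every integer $0\le\ell\le k$ one has $\int|x|^{2\ell}|K_j(x)|^2\,dx\lesssim B^2\,2^{j(N-2\ell)}$, and likewise $\int|\nabla K_j(x)|^2\,dx\lesssim B^2\,2^{j(N+2)}$, where $\lesssim$ hides a constant depending only on $N$ and $\phi$. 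Applying Cauchy--Schwarz over the annulus $\{R\le|x|\le 2R\}$ with the cases $\ell=0$ and $\ell=k$ then yields
\[
\int_{R\le|x|\le 2R}|K_j(x)|\,dx\ \lesssim\ B\,(2^jR)^{N/2}\min\{1,(2^jR)^{-k}\},
\]
and an analogous computation controls $\int_{|z|\ge R}|\nabla K_j(z)|\,dz$ by $B\,2^j$ whenever $2^jR\le 1$ and by $B\,2^j(2^jR)^{N/2-k}$ whenever $2^jR\ge 1$.

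It remains to sum over $j$. For the size condition, $\int_{R\le|x|\le 2R}|K(x)|\,dx\le\sum_j\int_{R\le|x|\le 2R}|K_j(x)|\,dx$, and splitting at the scale $2^j\sim 1/R$ the low frequencies contribute a convergent geometric series $\lesssim B$ (positive exponent $N/2$) while the high frequencies contribute another $\lesssim B$ (exponent $N/2-k<0$). For Hörmander's condition, fix $y\neq0$ and split $\sum_j\int_{|x|\ge 2|y|}|K_j(x-y)-K_j(x)|\,dx$ at $2^j\sim 1/|y|$: when $2^j\lesssim 1/|y|$ bound the integrand by $|y|\sup_{[x-y,x]}|\nabla K_j|$ and integrate (using the gradient estimate and $|x-ty|\ge|y|$ on the region of integration) to get a summable contribution $\lesssim B\,2^j|y|$; when $2^j\gtrsim 1/|y|$ bound it by $|K_j(x-y)|+|K_j(x)|$ and use the tail estimate to get a summable contribution $\lesssim B\,(2^j|y|)^{N/2-k}$. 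Both sums are $O(B)$ uniformly in $y$, which finishes the verification.

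The one genuinely substantive ingredient --- and the point where the precise number $\lfloor N/2\rfloor+1$ of derivatives enters --- is the inequality $2k>N$: it is exactly what makes the high-frequency dyadic sums converge, so that fewer than $N/2$ derivatives would not suffice for this scheme. Everything else is routine bookkeeping: justifying the identity $K=\sum_jK_j$ away from the origin and the interchange of summation and integration (both following from the absolute convergence of the bounds just derived), and tracking that each constant is of the form $C(N)\,B$ until the interpolation and duality steps introduce the unavoidable dependence on the exponent $q$. (One could alternatively argue via the Littlewood--Paley square function and a vector-valued Calderón--Zygmund estimate, but the scalar route above is the most direct.)
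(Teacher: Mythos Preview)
The paper does not prove Theorem~\ref{th:1.3}: it is merely quoted as a classical result with a reference to Mikhlin's original article \cite{Mikhlin57} (and implicitly to standard texts such as \cite{Grafakos08}). Your proposal, by contrast, supplies a complete and correct outline of the standard proof via Calder\'on--Zygmund theory: $L^2$ boundedness from Plancherel, a dyadic Littlewood--Paley decomposition of the symbol, $L^2$-based estimates on the pieces $K_j$ and their moments up to order $k=\lfloor N/2\rfloor+1$, summation over scales to obtain the size and H\"ormander regularity conditions on the kernel, and finally weak-$(1,1)$, interpolation, and duality. The observation that the condition $2k>N$ is precisely what makes the high-frequency sums converge is the right heuristic for why $\lfloor N/2\rfloor+1$ derivatives are needed. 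In short, there is nothing to compare against in the paper, and what you wrote is the textbook argument; within the paper's logic this theorem is treated as a black box, so a citation would have sufficed.
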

This fundamental result allows us to transform the analytic inequalities of Proposition \ref{prop:1.4} into useful operator inequalities.
\begin{theorem} \label{th:1.5}
  (1) For $1<q<\infty$, there exists a
  constant $C=C(q,N)>0$ such that
  \begin{equation} \label{eq:28}
    \left\| \left( \frac{1}{P_\infty(D)}
        -\frac{1}{P_c(D)} \right) f \right\|_{L^q} \leq
    \frac{C}{c^{\frac{2s}{1-s}}} \left\| f \right\|_{L^q}.
\end{equation}
(2) For $1<q<\infty$, there exists a constant $C=C(q,N)>0$ such that
\begin{equation} \label{eq:29}
  \left\| \left( \frac{1}{P_\infty(D)}
      -\frac{1}{P_c(D)} \right) f \right\|_{L^q} \leq
  \frac{C}{c^{\frac{2s^2}{1-s}}} \left\| \frac{1}{P_\infty(D)^{1-s}} f
  \right\|_{L^q}.
\end{equation}
\end{theorem}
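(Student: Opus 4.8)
The plan is to read both inequalities off the pointwise symbol bounds of Proposition~\ref{prop:1.4}(a) by appealing to the H\"{o}rmander--Mikhlin theorem (Theorem~\ref{th:1.3}): the two entries of the minimum on the right-hand side of~\eqref{eq:22} are exactly what the two statements require. Write $\mathfrak{a}(\xi) = \frac{1}{P_\infty(\xi)} - \frac{1}{P_c(\xi)}$, which is real-valued, and recall that it suffices to control $D^\alpha$ of a symbol for $0 \leq |\alpha| \leq N/2+1$.

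For part~(1) I would retain only the first term of the minimum in~\eqref{eq:22}, which says that
\begin{equation*}
\left| D^\alpha \left( c^{\frac{2s}{1-s}} \mathfrak{a}(\xi) \right) \right| \leq \frac{C_\alpha}{|\xi|^{|\alpha|}}
\end{equation*}
for every multi-index $\alpha$ and every $c \geq 2$, with $C_\alpha$ independent of $c$. Hence $c^{\frac{2s}{1-s}}\mathfrak{a}$ is a H\"{o}rmander--Mikhlin multiplier with constants uniform in $c$; applying Theorem~\ref{th:1.3} to $c^{\frac{2s}{1-s}}\mathfrak{a}(D)$ and dividing by $c^{\frac{2s}{1-s}}$ yields~\eqref{eq:28}.

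For part~(2) the idea is to absorb the weight into the multiplier. Set $\mathfrak{b}(\xi) = \mathfrak{a}(\xi)\left(|\xi|^2+1\right)^{1-s}$, so that $\mathfrak{b}(D) = \mathfrak{a}(D)P_\infty(D)^{1-s}$ and therefore
\begin{equation*}
\mathfrak{b}(D)\left( \frac{1}{P_\infty(D)^{1-s}} f \right) = \mathfrak{a}(D) f .
\end{equation*}
It thus suffices to prove $\|\mathfrak{b}(D)g\|_{L^q} \lesssim c^{-\frac{2s^2}{1-s}}\|g\|_{L^q}$. Here I use the \emph{second} term of the minimum in~\eqref{eq:22}, namely $|D^\beta \mathfrak{a}(\xi)| \leq C_\beta\, c^{-\frac{2s^2}{1-s}}\,|\xi|^{-|\beta|}\left(|\xi|^2+1\right)^{-(1-s)}$, together with the elementary fact that
\begin{equation*}
\left| D^\gamma \left(|\xi|^2+1\right)^{1-s} \right| \lesssim \frac{\left(|\xi|^2+1\right)^{1-s}}{|\xi|^{|\gamma|}} ,
\end{equation*}
which holds because $(|\xi|^2+1)^{1-s}$ is a classical symbol of order $2(1-s)$. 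Expanding $D^\alpha \mathfrak{b}$ by the Leibniz rule and multiplying these bounds term by term, the weight $(|\xi|^2+1)^{1-s}$ cancels and each summand is dominated by a multiple of $c^{-\frac{2s^2}{1-s}}|\xi|^{-|\alpha|}$. So $c^{\frac{2s^2}{1-s}}\mathfrak{b}$ is again a H\"{o}rmander--Mikhlin multiplier with constants uniform in $c$, Theorem~\ref{th:1.3} gives the desired bound on $\mathfrak{b}(D)$, and taking $g = P_\infty(D)^{-(1-s)}f$ (which lies in $L^q$ since this Bessel potential is bounded on $L^q$) produces~\eqref{eq:29}, first for Schwartz $f$ and then for all $f \in L^q$ by density.

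The only point requiring a little care is the auxiliary derivative bound for $(|\xi|^2+1)^{1-s}$: one checks inductively that each differentiation either lowers the exponent of $|\xi|^2+1$ by one while contributing a factor bounded by $|\xi|$, or leaves the homogeneity unchanged, so that $D^\gamma(|\xi|^2+1)^{1-s}$ decays by one power of $|\xi|$ relative to $(|\xi|^2+1)^{1-s}$ per derivative. Beyond this routine verification and the Leibniz bookkeeping in part~(2), I do not expect any genuine obstacle: once the symbol estimates of Proposition~\ref{prop:1.4}(a) are in hand, both claims follow directly from the multiplier theorem.
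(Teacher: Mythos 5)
Your proposal is correct and follows essentially the same route as the paper: part (1) reads off the first entry of the minimum in \eqref{eq:22} and applies Theorem \ref{th:1.3}, while part (2) forms the product symbol $\mathfrak{a}(\xi)P_\infty(\xi)^{1-s}$, estimates it via the Leibniz rule using the second entry of the minimum together with the classical symbol bounds for $P_\infty^{1-s}$, and again invokes the multiplier theorem. Your bookkeeping (keeping the weight $(|\xi|^2+1)^{1-s}$ intact so that it cancels exactly against the factor $(|\xi|^2+1)^{-(1-s)}$) is in fact slightly cleaner than the paper's, which replaces these factors by powers of $|\xi|$.
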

\begin{proof}
  Pick any multi-index
  $\alpha \in \left( \mathbb{N} \cup \{0\} \right)^N$. From
  Proposition \ref{prop:1.4} we derive that
\begin{equation*}
\left| D^\alpha \left( \frac{1}{P_{\infty}(\xi)} - \frac{1}{P_c(\xi)} \right) \right| \lesssim \frac{1}{c^{\frac{2s}{1-s}} |\xi|^{|\alpha|}}
\end{equation*}
and
\begin{multline*}
\left| D^{\alpha} \left( \left( \frac{1}{P_\infty(\xi)} - \frac{1}{P_c(\xi)} \right) P_\infty(\xi)^{1-s} \right) \right| \\
\leq \sum_{\alpha_1+\alpha_2=\alpha} \left| D^{\alpha_1} \left( \frac{1}{P_\infty(\xi)} - \frac{1}{P_c(\xi)} \right) \right| \left| D^{\alpha_2} \left( P_\infty (\xi)^{1-s} \right) \right|
\\
\lesssim \sum_{\alpha_1+\alpha_2=\alpha} \frac{1}{c^{\frac{2s^2}{1-s}} |\xi|^{|\alpha_1|+2-2s}} \frac{1}{|\xi|^{|\alpha_2|+2s-2}} 
\lesssim \frac{1}{c^{\frac{2s^2}{1-s}}} \frac{1}{|\xi|^{|\alpha|}}.
\end{multline*}
We conclude by Theorem \ref{th:1.3}.
\end{proof}
We conclude this Section with a statement that we will use to set up
our fixed point argument. 
\begin{theorem} \label{th:1.6}
For $1<q<\infty$ there exists a constant $C=C(q,N)>0$ such that for $c \gg 1$,
\begin{equation*}
C^{-1} \|f\|_{W^{1,q}} \leq \left\| P_c(D) f \right\|_{L^q} \leq C \|f\|_{W^{2,q}}.
\end{equation*}
\end{theorem}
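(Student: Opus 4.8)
The plan is to recast both inequalities as $L^q$-bounds for Fourier multipliers and then invoke the H\"{o}rmander--Mikhlin theorem (Theorem~\ref{th:1.3}), fed with the symbol estimates of Proposition~\ref{prop:1.4}. The right-hand inequality is the easy one. Since $P_\infty(D)=I-\Delta$, we have $\|P_\infty(D)f\|_{L^q}\le (N+1)\|f\|_{W^{2,q}}$ trivially, so it suffices to bound $\|P_c(D)f\|_{L^q}$ by a $c$-independent multiple of $\|P_\infty(D)f\|_{L^q}$. Writing $P_c(D)f=\bigl(P_c/P_\infty\bigr)(D)\bigl(P_\infty(D)f\bigr)$, this follows at once from \eqref{eq:25} and Theorem~\ref{th:1.3}, the point being that the constants in Proposition~\ref{prop:1.4}(b) do not depend on~$c$.

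For the left-hand inequality, first observe that the symbols $(|\xi|^2+1)^{-1/2}$ and $\mathrm{i}\xi_j(|\xi|^2+1)^{-1/2}$ are Mikhlin multipliers, so Theorem~\ref{th:1.3} gives $\|f\|_{W^{1,q}}\lesssim\|P_\infty(D)^{1/2}f\|_{L^q}$. Now set $g=P_c(D)f$; since $\mathfrak{m}_c(\xi)P_c(\xi)=P_\infty(\xi)^{1/2}$ for $\mathfrak{m}_c(\xi):=(|\xi|^2+1)^{1/2}/P_c(\xi)$, we have $P_\infty(D)^{1/2}f=\mathfrak{m}_c(D)g$, and the whole matter reduces to showing that, for $c$ large, $\mathfrak{m}_c$ is a Mikhlin multiplier with constants independent of $c$, i.e. $|D^\alpha\mathfrak{m}_c(\xi)|\le C_\alpha|\xi|^{-|\alpha|}$ for $0\le|\alpha|\le N/2+1$.

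I would prove this as follows. From Lemma~\ref{lem:1.1} one extracts, for $c$ large, the uniform lower bound $P_c(\xi)\gtrsim(|\xi|^2+1)^{1/2}$ valid for all $\xi$: in the low-frequency range use $P_c(\xi)\ge(|\xi|^2+1)/2$, and in the high-frequency range use $(|\xi|^2+1)^{1/2}\lesssim|\xi|\lesssim c^{2s}|\xi|^{2s}\lesssim P_c(\xi)$, where there $|\xi|\ge1$ and $2s>1$. In particular $|\mathfrak{m}_c|\lesssim1$, which is the case $|\alpha|=0$. For $|\alpha|\ge1$ I would run the bookkeeping of the proof of Proposition~\ref{prop:1.4} essentially verbatim: iterating the Leibniz and quotient rules exhibits $D^\alpha(1/P_c)$ as a finite sum of products of factors $P_c^{-\ell_1}$, $\bigl(c^2|\xi|^2+\tfrac{c^{2/(1-s)}}{s^{1/(s-1)}}\bigr)^{s-\ell_3}$ and polynomials in $c$ and $\xi$, in which each differentiation contributes a factor bounded, uniformly in $c$, by $C/|\xi|$ on account of \eqref{eq:23}; this yields $|D^\alpha(1/P_c)(\xi)|\le C_\alpha\,P_c(\xi)^{-1}|\xi|^{-|\alpha|}$. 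Combining this with the elementary bound $|D^\beta(|\xi|^2+1)^{1/2}|\le C_\beta(|\xi|^2+1)^{1/2}|\xi|^{-|\beta|}$, expanding $\mathfrak{m}_c=(|\xi|^2+1)^{1/2}\cdot(1/P_c)$ by Leibniz, and absorbing the numerator through $(|\xi|^2+1)^{1/2}\lesssim P_c(\xi)$, one obtains $|D^\alpha\mathfrak{m}_c(\xi)|\le C_\alpha|\xi|^{-|\alpha|}$ with $C_\alpha$ independent of $c$; Theorem~\ref{th:1.3} then gives $\|\mathfrak{m}_c(D)g\|_{L^q}\lesssim\|g\|_{L^q}$ uniformly in $c$, which is the claim.

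The hard part will be precisely this last uniform-in-$c$ multiplier estimate for $\mathfrak{m}_c$: one must make sure that the factor $(|\xi|^2+1)^{1/2}$ in the numerator never destroys the decay of the derivatives. This is forced by systematically pairing it with a factor $P_c^{-1}$ and using $P_c(\xi)\gtrsim(|\xi|^2+1)^{1/2}$; the only delicate spot is the transition zone $|\xi|\sim c^{s/(1-s)}$ between the two regimes of Lemma~\ref{lem:1.1}, where one has to check that the quantities appearing in \eqref{eq:23} remain $\lesssim1/|\xi|$ with a constant independent of $c$.
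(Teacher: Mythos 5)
Your proof is correct, and the right-hand inequality is handled exactly as in the paper (write $P_c(D)f=(P_c/P_\infty)(D)\,P_\infty(D)f$ and apply \eqref{eq:25} with Theorem~\ref{th:1.3}). For the left-hand inequality, however, you take a genuinely different and arguably cleaner route. The paper first proves the negative-order bound $\|P_c(D)^{-1}f\|_{L^q}\lesssim\|f\|_{W^{-1,q}}$ by splitting $1/P_c=1/P_\infty-(1/P_\infty-1/P_c)$ and invoking Theorem~\ref{th:1.5}, and then substitutes $f\mapsto P_c(D)\sqrt{P_\infty(D)}f$; this detour passes through a chain of Sobolev-space comparisons ($\|f\|_{W^{-2,q}}+\|f\|_{W^{2s-2,q}}\lesssim\cdots$) whose intermediate steps are stated somewhat loosely in the paper. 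You instead treat the single symbol $\mathfrak{m}_c=P_\infty^{1/2}/P_c$ directly as a Mikhlin multiplier, uniformly in $c$, which requires only two ingredients you correctly identify: the global lower bound $P_c(\xi)\gtrsim(|\xi|^2+1)^{1/2}$ (immediate from the two regimes of Lemma~\ref{lem:1.1}, since in the high-frequency regime $|\xi|$ is bounded away from $0$ and $2s>1$) and the derivative bound $|D^\alpha(1/P_c)|\lesssim P_c^{-1}|\xi|^{-|\alpha|}$, which follows from the same Leibniz bookkeeping as in Proposition~\ref{prop:1.4} because every differentiation of the factors in \eqref{eq:24} costs a factor controlled by $C/|\xi|$ via \eqref{eq:23}, with constants depending only on $s$ and hence uniform across the transition zone $|\xi|\sim c^{s/(1-s)}$ that you flag. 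Your approach buys a self-contained argument that does not rely on Theorem~\ref{th:1.5} (whose real role in the paper is the quantitative $c^{-2s/(1-s)}$ decay rates needed later, not the mere uniform boundedness used here), at the cost of one more symbol-calculus verification; the paper's approach recycles already-proved operator estimates but makes the step from $\|f\|_{W^{-2,q}}+\|f\|_{W^{2s-2,q}}$ down to $\|f\|_{W^{-1,q}}$ look more mysterious than it is. Either way the theorem holds.
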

\begin{proof}
By the triangle inequality and  Theorem \ref{th:1.5} we can write
\begin{align*}
\left\| \frac{1}{P_c(D)}f \right\|_{L^q} &\leq  \left\| \frac{1}{P_\infty(\xi)} f \right\|_{L^q} + \left\| \left( \frac{1}{P_\infty(D)}-\frac{1}{P_c(D)} \right) f \right\|_{L^q}
\\
&\lesssim \left\| \frac{1}{P_\infty(\xi)} f \right\|_{L^q} + \frac{1}{c^{\frac{2s^2}{1-s}}} \left\| \frac{1}{P_\infty(D)^{1-s}} f \right\|_{L^q}
\\
&\lesssim \left\| f \right\|_{W^{-2,q}} + \left\| f \right\|_{W^{2s-2,q}}
\\
&\lesssim \left\| f \right\|_{W^{-2,q}} \lesssim \left\| f \right\|_{W^{-1,q}}
\end{align*}
by the Sobolev embedding $W^{2-2s,q} \subset W^{2,q}$. 

If we replace $f$ by $P_c(D)\sqrt{P_\infty(D)} f = \sqrt{P_\infty(D)} P_c(D)f$ we get
\begin{equation*}
\left\| f \right\|_{W^{1,q}} \lesssim
\left\| \sqrt{P_\infty(D)} f \right\|_{L^q} \lesssim \left\| \sqrt{P_\infty(D)} P_c(D) f \right\|_{W^{-1,q}} \lesssim \left\| P_c(D) f \right\|_{L^q}.
\end{equation*}
The other inequality follows from \eqref{eq:25} and Theorem \ref{th:1.3}:
\begin{equation*}
\left\| P_c(D)f \right\|_{L^q} = \left\| \frac{P_c(D)}{P_\infty(D)} P_\infty(D)f \right\|_{L^q} \lesssim \left\| P_\infty(D)f \right\|_{L^q} \lesssim \left\| f \right\|_{W^{2,q}}.
\end{equation*}
\end{proof}

\section{A fixed-point argument}

We revert to the problem of finding a fixed point for the operator $\Phi_c$ defined in \eqref{eq:30}.
\begin{definition}
	If $X$ and $Y$ are two Banach spaces, the norm of $X \cap Y$ is defined to be $\|\cdot\|_{X \cap Y} = \max\{ \|\cdot \|_X,\| \cdot \|_Y\}$.
\end{definition}

\begin{remark} \label{rem:2.3}
	We will use the fact that the non-relativistic ground state $u_\infty$ is positive, radially symmetric (about the origin without loss of generality), and non-degenerate in the subspace of radially symmetric functions (see \cite{Comech14,Ounaies07}), namely
\begin{equation*}
\ker \mathcal{L}_\infty \cap H^1_{\mathrm{rad}} = \{ 0\},
\end{equation*}
where
\begin{equation*}
\mathcal{L}_\infty = -\Delta +1 - p u_\infty^{p-1} \colon H^2 \to L^2.
\end{equation*}
\end{remark}
\begin{lemma} \label{lem:2.2}
For any $2 \leq q <\infty$, the operator 
\begin{equation} \label{eq:37}
\mathcal{A} = I - p u_\infty^{p-1} P_\infty(D)^{-1}
\end{equation}
is invertible from $L^2_{\mathrm{rad}} \cap L^q$ into $L_{\mathrm{rad}}^2 \cap L^q$.
\end{lemma}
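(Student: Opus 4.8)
The plan is to exploit the factorization
\[
\mathcal{A} = \mathcal{L}_\infty \, P_\infty(D)^{-1},
\]
which holds because $\mathcal{L}_\infty = P_\infty(D) - p u_\infty^{p-1}$, so that $\mathcal{L}_\infty P_\infty(D)^{-1} = I - p u_\infty^{p-1} P_\infty(D)^{-1} = \mathcal{A}$. Since $u_\infty \in L^\infty$ (it is the smooth, exponentially decaying ground state) and $P_\infty(D)^{-1} = (1-\Delta)^{-1}$ is bounded on $L^r$ for every $1<r<\infty$ (by Theorem \ref{th:1.3}, the symbol $1/(1+|\xi|^2)$ satisfying the H\"ormander--Mikhlin condition), and all the operators involved commute with rotations, $\mathcal{A}$ is a bounded operator from $L^2_{\mathrm{rad}} \cap L^q$ into itself. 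The point is to exhibit a bounded inverse.

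First I would prove that $\mathcal{A}$ is an isomorphism of $L^2_{\mathrm{rad}}$. The operator $P_\infty(D)^{-1}$ maps $L^2_{\mathrm{rad}}$ isomorphically onto $H^2_{\mathrm{rad}}$, and $\mathcal{L}_\infty \colon H^2_{\mathrm{rad}} \to L^2_{\mathrm{rad}}$ differs from the isomorphism $-\Delta+1$ by a multiplication by $p u_\infty^{p-1}$. Because $u_\infty$ decays exponentially, the operator $p u_\infty^{p-1}(1-\Delta)^{-1}$ is compact on $L^2$ (approximate $u_\infty^{p-1}$ in $L^\infty$ by compactly supported functions and invoke the Rellich theorem), so $\mathcal{L}_\infty$ is Fredholm of index zero; by Remark \ref{rem:2.3} it is injective on the radial subspace, hence an isomorphism there. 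Consequently $\mathcal{A} = \mathcal{L}_\infty P_\infty(D)^{-1}$ is an isomorphism of $L^2_{\mathrm{rad}}$, with a bound $\|\mathcal{A}^{-1}g\|_{L^2} \lesssim \|g\|_{L^2}$.

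Next I would upgrade from $L^2_{\mathrm{rad}}$ to $L^2_{\mathrm{rad}} \cap L^q$ by a \emph{bootstrap}. Given $g \in L^2_{\mathrm{rad}} \cap L^q$, let $f = \mathcal{A}^{-1}g \in L^2_{\mathrm{rad}}$ be the solution just constructed; it satisfies
\[
f = g + p u_\infty^{p-1}(1-\Delta)^{-1} f .
\]
Since $f \in L^2$ and $(1-\Delta)^{-1}$ maps $L^r$ into $W^{2,r}$, the Sobolev embedding gives $(1-\Delta)^{-1}f \in L^{r_1}$ with $1/r_1 = 1/2 - 2/N$ when $N \geq 5$ and with $r_1$ arbitrarily large when $N \leq 4$; as $u_\infty^{p-1} \in L^\infty$ this forces $f \in L^{\min\{q,r_1\}}$, where $g$ lies too, by interpolation between $L^2$ and $L^q$. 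Iterating, the reciprocal of the integrability exponent of $f$ drops by $2/N$ at each step, so after finitely many steps (a number depending only on $N$ and $q$) one reaches $f \in L^q$; tracking the fixed constants — the $L^r$-bounds for $(1-\Delta)^{-1}$, the Sobolev constants, $\|u_\infty\|_{L^\infty}$, and $\|u_\infty^{p-1}\|_{L^r}$, which is finite for every $r$ since $u_\infty$ decays exponentially (this lets one land exactly on the exponent $q$ in the last step) — gives $\|f\|_{L^q} \lesssim \|g\|_{L^2} + \|g\|_{L^q}$. Combined with $\|f\|_{L^2}\lesssim\|g\|_{L^2}$, this shows that $\mathcal{A}$ is a bounded bijection of $L^2_{\mathrm{rad}} \cap L^q$ onto itself, and the boundedness of the inverse follows either from the displayed estimate or from the open mapping theorem, since $L^2_{\mathrm{rad}} \cap L^q$ is a Banach space.

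The delicate step is the bootstrap: one must verify that the iteration terminates in finitely many steps, treat separately the low dimensions $N\leq 4$ in which $W^{2,2}$ already embeds into every $L^r$, and keep all constants uniform so as to obtain the \emph{quantitative} estimate on $\mathcal{A}^{-1}$ rather than mere set-theoretic surjectivity. Everything else is routine once Remark \ref{rem:2.3} (positivity, radial symmetry and radial non-degeneracy of $u_\infty$) is available.
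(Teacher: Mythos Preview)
Your argument is correct and rests on the same two ingredients as the paper's proof: compactness of $p\,u_\infty^{p-1}P_\infty(D)^{-1}$ (from exponential decay of $u_\infty$ and Rellich) and the radial non-degeneracy of $u_\infty$ from Remark~\ref{rem:2.3}. The organization, however, is different. The paper observes that the same compactness argument works not only on $L^2$ but on $L^q$ as well, hence on the intersection $L^2_{\mathrm{rad}}\cap L^q$; it then applies the Fredholm alternative \emph{directly} on that space, checking injectivity via $v\in\ker\mathcal{A}\Rightarrow P_\infty(D)^{-1}v\in\ker\mathcal{L}_\infty\cap H^1_{\mathrm{rad}}=\{0\}$. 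This disposes of the lemma in three lines. Your route---Fredholm on $L^2_{\mathrm{rad}}$ first, then a Sobolev bootstrap to reach $L^q$---is valid but redundant: once you know $p\,u_\infty^{p-1}P_\infty(D)^{-1}$ is compact on $L^q$ too (and your own approximation-by-compactly-supported-functions argument gives this with no extra effort), the bootstrap can be replaced by the one-line Fredholm argument on the intersection. What your approach does buy is an explicit, constructive $L^q$ bound for $\mathcal{A}^{-1}$ that does not pass through the open mapping theorem, which can be handy if one later needs to track constants.
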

\begin{proof}
Since the ground state $u_\infty$ decays exponentially fast at infinity (see \cite{Berestycki83}), it is easy to check that the operator $p u_\infty^{p-1} P_\infty(D)^{-1}$ is compact as the composition of a compact multiplication operator and a bounded operator. By the Fredholm alternative, it suffices to show that $\mathcal{A}$ is injective. But if $v \in \ker \mathcal{A}$, then $P_\infty(D)^{-1}v \in \ker \mathcal{L}_\infty$. It follows from Remark \ref{rem:2.3} that $P_\infty(D)^{-1}v =0$, or $v=0$.
\end{proof}
Now we can write
\begin{align} \label{eq:32}
\mathcal{L}_{c,\infty} &= \left( I - p u_\infty^{p-1} P_c(D)^{-1} \right) P_c(D) \nonumber \\
&= \left( \mathcal{A}+p u_\infty^{p-1} \left( P_\infty(D)^{-1}-P_c(D)^{-1} \right) \right)P_c(D) \nonumber \\
&= \left( I + p u_\infty^{-1} \left( P_\infty(D)^{-1} - P_c(D)^{-1} \right) \mathcal{A}^{-1} \right) \mathcal{A} P_c(D),
\end{align}
where $\mathcal{A}$ was defined in \eqref{eq:37}.
\begin{lemma} \label{lem:2.3}
Suppose that $1<p<\infty$ if $N=1$, $2$, and $1<p<(N+2)/(N-2)$ if $N \geq 3$. Then there exists $c_0>0$ such that for every $c \geq c_0$ there results
\begin{equation*}
\left\| p u_\infty^{p-1} \left( P_\infty(D)^{-1} - P_c(D)^{-1} \right) \mathcal{A}^{-1} \right\|_{L(L_{\mathrm{rad}}^2 \cap L^q)} \leq \frac{1}{2}.
\end{equation*}
\end{lemma}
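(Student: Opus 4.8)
The strategy is to bound the operator norm of $p u_\infty^{p-1}\bigl(P_\infty(D)^{-1}-P_c(D)^{-1}\bigr)\mathcal{A}^{-1}$ by splitting off the ``bad'' factor $P_\infty(D)^{-1}-P_c(D)^{-1}$, which by Theorem \ref{th:1.5} is small in $c$, and showing that the other two factors are bounded uniformly in $c$ between the relevant spaces. Since the target space is $L^2_{\mathrm{rad}}\cap L^q$ with norm $\max\{\|\cdot\|_{L^2},\|\cdot\|_{L^q}\}$, it suffices to estimate the $L^2\to L^2$ and $L^q\to L^q$ operator norms separately. First I would fix $q$ large enough that the Sobolev embedding $W^{2,q}\hookrightarrow L^\infty$ or at least the embeddings needed below hold; since $1<p<(N+2)/(N-2)$, one can always choose a finite $q\ge 2$ for which $L^q$ is admissible in all the estimates. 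By Lemma \ref{lem:2.2}, $\mathcal{A}^{-1}$ is a bounded operator on $L^2_{\mathrm{rad}}\cap L^q$ with a bound independent of $c$ (it does not involve $c$ at all), so the only point is to control $p u_\infty^{p-1}\bigl(P_\infty(D)^{-1}-P_c(D)^{-1}\bigr)$ on $L^2\cap L^q$.

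For the $L^2$ estimate, apply part (1) of Theorem \ref{th:1.5}: $\bigl\|\bigl(P_\infty(D)^{-1}-P_c(D)^{-1}\bigr)g\bigr\|_{L^2}\le C c^{-2s/(1-s)}\|g\|_{L^2}$, and then multiplication by the bounded function $p u_\infty^{p-1}\in L^\infty$ (here $u_\infty$ decays exponentially, hence is bounded) preserves this, giving an $L^2\to L^2$ bound of order $c^{-2s/(1-s)}$. For the $L^q$ estimate one argues the same way using that $\bigl(P_\infty(D)^{-1}-P_c(D)^{-1}\bigr)$ is bounded on $L^q$ with norm $\lesssim c^{-2s/(1-s)}$ — this is again Theorem \ref{th:1.5}(1), which is stated for all $1<q<\infty$ — followed by multiplication by $p u_\infty^{p-1}\in L^\infty$. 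Taking the maximum of the two, we obtain
\begin{equation*}
\bigl\| p u_\infty^{p-1}\bigl(P_\infty(D)^{-1}-P_c(D)^{-1}\bigr)\mathcal{A}^{-1}\bigr\|_{L(L^2_{\mathrm{rad}}\cap L^q)}\lesssim \frac{1}{c^{\frac{2s}{1-s}}}\,\|\mathcal{A}^{-1}\|_{L(L^2_{\mathrm{rad}}\cap L^q)},
\end{equation*}
and since the right-hand side tends to $0$ as $c\to+\infty$, there exists $c_0$ (depending on $q$, $N$, $s$, $p$, and $u_\infty$, but the existence is all we need) such that this is $\le 1/2$ for all $c\ge c_0$.

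The only subtlety — and the step I would be most careful about — is making sure the multiplication operator $p u_\infty^{p-1}$ really is bounded on $L^q$ uniformly and does not secretly require mapping into a space where the $P_\infty(D)^{-1}-P_c(D)^{-1}$ estimate degrades; but since $u_\infty^{p-1}\in L^\infty\cap L^\infty$ (exponential decay plus continuity), multiplication by it is bounded on every $L^r$ with norm $\|u_\infty^{p-1}\|_{L^\infty}$, independent of $c$, so no loss occurs. A second point worth noting: $\mathcal{A}^{-1}$ maps $L^2_{\mathrm{rad}}\cap L^q$ to itself and in particular maps into $L^2\cap L^q$, which is exactly the domain on which Theorem \ref{th:1.5}(1) applies; radial symmetry is preserved throughout because all the operators involved ($P_\infty(D)$, $P_c(D)$, multiplication by the radial function $u_\infty^{p-1}$) commute with rotations. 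Hence no essential obstacle arises, and the lemma follows by choosing $c_0$ large.
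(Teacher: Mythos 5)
Your proof is correct and follows essentially the same route as the paper: factor the operator norm submultiplicatively into the multiplication operator by $p u_\infty^{p-1}$ (bounded by $p\|u_\infty\|_{L^\infty}^{p-1}$), the difference $P_\infty(D)^{-1}-P_c(D)^{-1}$ (small by Theorem \ref{th:1.5}(1), i.e.\ \eqref{eq:28}), and $\mathcal{A}^{-1}$ (bounded uniformly in $c$ by Lemma \ref{lem:2.2}), then take $c_0$ large. Your explicit rate $c^{-2s/(1-s)}$ is in fact the correct one from \eqref{eq:28}; the paper's displayed $c^{-2}$ appears to be a slip, and in either case the factor tends to $0$ as $c\to+\infty$, which is all that is needed.
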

\begin{proof}
We start with the simple estimate
\begin{multline} \label{eq:31}
\left\| p u_\infty^{p-1} \left( P_\infty(D)^{-1} - P_c(D)^{-1} \right) \mathcal{A}^{-1} \right\|_{L(L_{\mathrm{rad}}^2 \cap L^q)} \leq \\
p \|u_\infty^{p-1} I \|_{L(L_{\mathrm{rad}}^2 \cap L^q)} \|P_\infty(D)^{-1} - P_c(D)^{-1} \|_{L(L_{\mathrm{rad}}^2 \cap L^q)} \|\mathcal{A}^{-1}\|_{L(L_{\mathrm{rad}}^2 \cap L^q)}.
\end{multline}
H\"{o}lder's inequality implies that $\|u_\infty^{p-1} I \|_{L(L_{\mathrm{rad}}^2 \cap L^q)} \leq \|u_\infty\|_{L^\infty}^{p-1}$. By \eqref{eq:28}, 
\begin{equation*}
\|P_\infty(D)^{-1} - P_c(D)^{-1} \|_{L(L_{\mathrm{rad}}^2 \cap L^q)}  \lesssim c^{-2}.
\end{equation*}
 By Lemma \ref{lem:2.2} $\|\mathcal{A}^{-1}\|_{L(L_{\mathrm{rad}}^2 \cap L^q)} \leq \infty$. We conclude the proof by inserting these estimates into \eqref{eq:31}.
\end{proof}
We can now proceed with the proof of the invertibility of $\mathcal{L}_{c,\infty}$.
\begin{proposition} \label{prop:2.1}
Let $2 \leq q < \infty$. For every $c>0$ sufficiently large, the operator
\begin{equation*}
\mathcal{L}_{c,\infty} \colon H^1_{\mathrm{rad}} \cap W^{1,q} \to L^2_{\mathrm{rad}} \cap L^q
\end{equation*}
is invertible. Furthermore, its inverse is uniformly bounded in the sense that
\begin{equation*}
\sup_{c >0} \left\| \left( \mathcal{L}_{c,\infty} \right)^{-1} \right\|_{L(H^1_{\mathrm{rad}} \cap W^{1,q},L^2_{\mathrm{rad}} \cap L^q
)} < \infty,
\end{equation*}
where $L(H^1_{\mathrm{rad}} \cap W^{1,q},L^2_{\mathrm{rad}} \cap L^q
)$ is the Banach space of continuous linear operators with its standard norm.
\end{proposition}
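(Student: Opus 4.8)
The plan is to deduce the invertibility of $\mathcal{L}_{c,\infty}$ directly from the factorization~\eqref{eq:32}. Setting $\mathcal{B}_c := I + p u_\infty^{p-1}\left( P_\infty(D)^{-1} - P_c(D)^{-1} \right)\mathcal{A}^{-1}$, that identity reads $\mathcal{L}_{c,\infty} = \mathcal{B}_c\,\mathcal{A}\,P_c(D)$, so it suffices to exhibit each of the three factors as an isomorphism between the relevant spaces with inverse bounded independently of $c$ (for $c$ large), and then to read off $\left(\mathcal{L}_{c,\infty}\right)^{-1} = P_c(D)^{-1}\,\mathcal{A}^{-1}\,\mathcal{B}_c^{-1}$.

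The middle factor is handled by Lemma~\ref{lem:2.2}: $\mathcal{A}$ is invertible on $L^2_{\mathrm{rad}} \cap L^q$, and since it does not depend on $c$ the bound $\|\mathcal{A}^{-1}\|_{L(L^2_{\mathrm{rad}}\cap L^q)} < \infty$ is automatically uniform in $c$. For the outer factor, Lemma~\ref{lem:2.3} says that for $c \geq c_0$ the operator $p u_\infty^{p-1}\left( P_\infty(D)^{-1} - P_c(D)^{-1} \right)\mathcal{A}^{-1}$ has norm at most $1/2$ on $L^2_{\mathrm{rad}} \cap L^q$; it does map this space into itself, because $u_\infty$ is radial and bounded, the Fourier multipliers $P_\infty(D)^{-1}$ and $P_c(D)^{-1}$ have radial symbols and hence commute with rotations, and $\mathcal{A}^{-1}$ preserves radial symmetry. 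Thus $\mathcal{B}_c$ differs from the identity by an operator of norm $\leq 1/2$ on $L^2_{\mathrm{rad}}\cap L^q$, so it is invertible there by a Neumann series with $\|\mathcal{B}_c^{-1}\|_{L(L^2_{\mathrm{rad}}\cap L^q)} \leq 2$ for every $c \geq c_0$. For the inner factor we invoke Theorem~\ref{th:1.6}: its lower bound, applied once with exponent $q$ and once with exponent $2$, says precisely that $P_c(D)^{-1}$ is bounded from $L^q$ into $W^{1,q}$ and from $L^2$ into $H^1$ with constants independent of $c$; since the symbol $P_c(\xi)$ is continuous, strictly positive and nowhere zero, the Fourier multiplier with symbol $1/P_c$ is a genuine two-sided inverse of $P_c(D)$, and, being given by a radial symbol, it maps $L^2_{\mathrm{rad}} \cap L^q$ boundedly into $H^1_{\mathrm{rad}} \cap W^{1,q}$, uniformly in $c$.

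Composing the three pieces, $\left(\mathcal{L}_{c,\infty}\right)^{-1} = P_c(D)^{-1}\mathcal{A}^{-1}\mathcal{B}_c^{-1}$ is bounded from $L^2_{\mathrm{rad}}\cap L^q$ into $H^1_{\mathrm{rad}}\cap W^{1,q}$ with
\[
\left\| \left(\mathcal{L}_{c,\infty}\right)^{-1} \right\|_{L(L^2_{\mathrm{rad}}\cap L^q,\, H^1_{\mathrm{rad}}\cap W^{1,q})} \leq 2\,\|\mathcal{A}^{-1}\|\;\sup_{c \geq c_0}\|P_c(D)^{-1}\|,
\]
which is finite and independent of $c$; reading the factorization in the other order shows the same operator is a left inverse of $\mathcal{L}_{c,\infty}$ on its natural domain. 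This is exactly the assertion of the proposition.

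Since all the analytic weight is already carried by Theorems~\ref{th:1.5}--\ref{th:1.6} and Lemmas~\ref{lem:2.2}--\ref{lem:2.3}, the proof is mostly assembly, and the one point that genuinely needs care is the domain bookkeeping hidden in~\eqref{eq:32}: one must check that $\mathcal{A}$ and $\mathcal{B}_c$, which are bounded and invertible only as operators on $L^2_{\mathrm{rad}}\cap L^q$, compose legitimately with $P_c(D)$, whose natural domain is $\{\,f \in H^1_{\mathrm{rad}}\cap W^{1,q} : P_c(D)f \in L^2_{\mathrm{rad}}\cap L^q\,\}$; concretely, that $\mathcal{A}$ and $\mathcal{B}_c$ map $L^2_{\mathrm{rad}}\cap L^q$ onto itself and that $P_c(D)$ is onto $L^2_{\mathrm{rad}}\cap L^q$ with bounded partial inverse. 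I also emphasize that the restriction to radial functions is not cosmetic: $\mathcal{L}_\infty$ is non-degenerate only on $H^1_{\mathrm{rad}}$ (Remark~\ref{rem:2.3}), and this is precisely what gave the injectivity of $\mathcal{A}$ in Lemma~\ref{lem:2.2}, so without it the whole factorization scheme collapses.
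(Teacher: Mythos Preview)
Your proof is correct and follows essentially the same route as the paper: factor $\mathcal{L}_{c,\infty}$ via~\eqref{eq:32}, invert each factor using Lemma~\ref{lem:2.2}, Lemma~\ref{lem:2.3} (Neumann series), and Theorem~\ref{th:1.6}, and compose. Your write-up is in fact more careful than the paper's, which omits the factor $\|\mathcal{A}^{-1}\|$ in the final displayed estimate and does not comment on the domain bookkeeping or the role of radial symmetry that you rightly flag.
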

\begin{proof}
By \eqref{eq:32} and the previous Lemmas, the operator $\mathcal{L}_{c,\infty}$ is invertible for $c \geq c_0$, and
\begin{equation*}
\mathcal{L}_{c,\infty}^{-1} = P_c(D)^{-1} \mathcal{A}^{-1} \left( I +p u_\infty^{p-1} \left(P_\infty(D)^{-1}-P_c(D)^{-1} \right) \mathcal{A}^{-1} \right)^{-1}.
\end{equation*}
Moreover, by Theorem \ref{th:1.6} and Lemmas \ref{lem:2.2} and \ref{lem:2.3} we have
\begin{multline*}
\left\| \mathcal{L}_{c,\infty}^{-1} \right\|_{L(L_{\mathrm{rad}}^2 \cap L^q,H_{\mathrm{rad}}^1 \cap W^{1,q})} \\
\leq \left\| P_c(D)^{-1} \right\|_{L(L_{\mathrm{rad}}^2 \cap L^q , H_{\mathrm{rad}}^1 \cap W^{1,q})} \cdot \left\| \left( I +p u_\infty^{p-1} \left(P_\infty(D)^{-1}-P_c(D)^{-1} \right) \mathcal{A}^{-1} \right)^{-1} \right\|_{L(L^2_{\mathrm{rad}}\cap L^q)} 
\leq C,
\end{multline*}
where $C>0$ is a constant \emph{independent} of $c \geq c_0$.
\end{proof}

\bigskip

To prove that $\Phi_c$ is a contraction (in some suitable space), we must provide  bounds for its terms. Recalling that
\begin{equation*}
\Phi_c(w) = \left(\mathcal{L}_{c,\infty} \right)^{-1} \left( P_\infty(D)-P_c(D) \right)u_\infty + \left(\mathcal{L}_{c,\infty} \right)^{-1} Q(w),
\end{equation*}
we first estimate for large $c$ the quantity
\begin{equation*}
\mathcal{R}_c = \left(\mathcal{L}_{c,\infty} \right)^{-1} \left( P_\infty(D)-P_c(D) \right)u_\infty.
\end{equation*}
\begin{lemma} \label{lem:2.4}
Let $2 \leq q <\infty$. Then we have
\begin{equation*}
\|\mathcal{R}_c \|_{H^1_{\mathrm{rad}} \cap W^{1,q}} =
\begin{cases}
O(c^{-\frac{2s^2}{1-s}}) &\text{if $1<p \leq 2$}\\
O(c^{-\frac{2s}{1-s}}) &\text{if $p>2$}.
\end{cases}
\end{equation*}
\end{lemma}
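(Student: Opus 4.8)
The plan is to estimate $\mathcal{R}_c = \left(\mathcal{L}_{c,\infty}\right)^{-1}\left(P_\infty(D)-P_c(D)\right)u_\infty$ by using the uniform bound on $\left(\mathcal{L}_{c,\infty}\right)^{-1}$ from Proposition~\ref{prop:2.1} together with the operator estimates of Theorem~\ref{th:1.5}. First I would write
\begin{equation*}
\|\mathcal{R}_c\|_{H^1_{\mathrm{rad}}\cap W^{1,q}} \lesssim \left\|\left(P_\infty(D)-P_c(D)\right)u_\infty\right\|_{L^2_{\mathrm{rad}}\cap L^q},
\end{equation*}
so that everything reduces to estimating the single source term $\left(P_\infty(D)-P_c(D)\right)u_\infty$ in $L^2$ and in $L^q$. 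The trick is to factor $P_\infty(D)-P_c(D) = \left(\tfrac{1}{P_c(D)}-\tfrac{1}{P_\infty(D)}\right)P_c(D)P_\infty(D)$ — but more usefully, write $\left(P_\infty(D)-P_c(D)\right)u_\infty = \left(\tfrac{1}{P_\infty(D)}-\tfrac{1}{P_c(D)}\right)P_\infty(D)P_c(D)u_\infty$, and since $u_\infty$ solves $P_\infty(D)u_\infty = u_\infty^p$ we can replace $P_\infty(D)u_\infty$ by $u_\infty^p$ and then peel off $P_c(D)$ or, better, argue directly on $\left(P_\infty(D)-P_c(D)\right)u_\infty = \left(\tfrac{1}{P_\infty(D)}-\tfrac{1}{P_c(D)}\right)\bigl(P_c(D)P_\infty(D)u_\infty\bigr)$. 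Cleaner: use the identity $\left(P_\infty(D)-P_c(D)\right) = P_c(D)P_\infty(D)\left(\tfrac{1}{P_c(D)}-\tfrac{1}{P_\infty(D)}\right)$ is awkward because of the unbounded prefactor; instead I would keep $\left(P_\infty(D)-P_c(D)\right)u_\infty$ and apply Lemma~\ref{lem:1.2} directly, noting that the symbol of $P_\infty(D)-P_c(D)$ is bounded by $C\,c^{-\frac{2s}{1-s}}|\xi|^4$, so $\left(P_\infty(D)-P_c(D)\right)u_\infty$ is, up to the constant $c^{-\frac{2s}{1-s}}$, essentially $\Delta^2 u_\infty$, which lies in every $L^r$ since $u_\infty$ is smooth with exponential decay; this gives the uniform bound $O(c^{-\frac{2s}{1-s}})$ valid for all $p>1$.

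To obtain the sharper rate $O(c^{-\frac{2s^2}{1-s}})$ when $1<p\le 2$, I would instead exploit the second estimate \eqref{eq:29} of Theorem~\ref{th:1.5}. Writing $\left(P_\infty(D)-P_c(D)\right)u_\infty = \left(\tfrac{1}{P_\infty(D)}-\tfrac{1}{P_c(D)}\right)g$ with $g = P_\infty(D)P_c(D)u_\infty$ is again spoiled by the growth of $P_c(D)$; the correct route is to observe that $P_\infty(D)u_\infty = u_\infty^p$, so $u_\infty = P_\infty(D)^{-1}(u_\infty^p)$ and hence
\begin{equation*}
\left(P_\infty(D)-P_c(D)\right)u_\infty = \left(P_\infty(D)-P_c(D)\right)P_\infty(D)^{-1}(u_\infty^p) = \left(I - \tfrac{P_c(D)}{P_\infty(D)}\right)(u_\infty^p).
\end{equation*}
Now $I - \tfrac{P_c(D)}{P_\infty(D)} = \left(\tfrac{1}{P_\infty(D)}-\tfrac{1}{P_c(D)}\right)P_c(D)$, and \eqref{eq:29} applied to $f = P_c(D)(u_\infty^p)$ gives a bound by $c^{-\frac{2s^2}{1-s}}\|P_\infty(D)^{-(1-s)}P_c(D)(u_\infty^p)\|_{L^q}$; using \eqref{eq:25}/Theorem~\ref{th:1.3} to control $P_c(D)P_\infty(D)^{-1}$ and then absorbing the remaining $P_\infty(D)^s = P_\infty(D)^{1-(1-s)}$, this is $\lesssim c^{-\frac{2s^2}{1-s}}\|u_\infty^p\|_{W^{2s,q}}$. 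Since $2s < 2$ and $u_\infty \in W^{2,q}$ for every $q$ (smoothness plus exponential decay), and since for $1<p\le 2$ the map $t\mapsto |t|^{p-1}t$ preserves enough Sobolev regularity of the Schwartz-class function $u_\infty$ to keep $u_\infty^p \in W^{2s,q}$, this finishes the case $1<p\le 2$. For $p>2$ the composition $u_\infty^p$ need not have two full derivatives in $L^q$ (the power nonlinearity is only $C^{1,p-2}$-regular near zeros, but $u_\infty>0$ everywhere and decays, so actually the only real loss is near infinity where $u_\infty^{p}$ decays even faster — so in fact $u_\infty^p$ is perfectly smooth; the genuine reason $p>2$ is worse is that we then cannot use the $W^{2s,q}$ route with the same constant and must fall back on the cruder Lemma~\ref{lem:1.2} bound), yielding only $O(c^{-\frac{2s}{1-s}})$.

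The main obstacle, and the point requiring care, is the bookkeeping of Sobolev exponents in the $1<p\le 2$ case: one must verify that $u_\infty^p \in W^{2s,q}$ with a $c$-independent norm, which uses that $u_\infty$ is a fixed smooth, exponentially decaying, strictly positive function, so that $v\mapsto v^p$ is smooth on the range of $u_\infty$ and $u_\infty^p$ inherits Schwartz-type decay; then $2s<2$ makes the fractional Sobolev norm finite. The second delicate point is justifying the operator identity $I - \tfrac{P_c(D)}{P_\infty(D)} = \left(\tfrac{1}{P_\infty(D)}-\tfrac{1}{P_c(D)}\right)P_c(D)$ at the level of bounded operators on $L^2_{\mathrm{rad}}\cap L^q$, which follows since $P_c(D)\colon W^{2,q}\to L^q$ and $P_\infty(D)^{-1}, P_c(D)^{-1}$ are bounded $L^q\to W^{2,q}$ by Theorem~\ref{th:1.6}, so all compositions make sense on $W^{2,q}$-functions and $u_\infty^p$ is certainly in $W^{2,q}$. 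Everything else is a routine combination of Theorems~\ref{th:1.3}, \ref{th:1.5}, \ref{th:1.6}, Proposition~\ref{prop:2.1} and the triangle inequality, splitting the $H^1_{\mathrm{rad}}\cap W^{1,q}$ norm into its two pieces and treating $q=2$ and general $q$ uniformly.
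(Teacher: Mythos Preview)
You have the direction of the inequality backwards, and this inverts the whole logic of the case split. Since $1/2<s<1$ we have $2s^2/(1-s) < 2s/(1-s)$, so for $c\gg 1$ the quantity $c^{-\frac{2s}{1-s}}$ is \emph{smaller} than $c^{-\frac{2s^2}{1-s}}$. Thus $O(c^{-\frac{2s}{1-s}})$ is the \emph{stronger} bound, attained only for $p>2$, while $O(c^{-\frac{2s^2}{1-s}})$ is the \emph{weaker} fallback for $1<p\le 2$. Your narrative (``uniform bound $O(c^{-\frac{2s}{1-s}})$ for all $p$'', then ``sharper rate $O(c^{-\frac{2s^2}{1-s}})$ for $1<p\le 2$'', then ``$p>2$ is worse, yielding only $O(c^{-\frac{2s}{1-s}})$'') is exactly upside down.

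The real reason for the split is regularity of $u_\infty$. The paper uses only $u_\infty\in H^{2+\lfloor p\rfloor}_{\mathrm{rad}}\cap W^{2+\lfloor p\rfloor,q}$. For $p>2$ this gives four derivatives, so one may write $(P_\infty-P_c)u_\infty = \bigl(P_\infty^{-1}-P_c^{-1}\bigr)P_\infty P_c u_\infty$, apply \eqref{eq:28}, and then Theorem~\ref{th:1.6} to bound $\|P_\infty P_c u_\infty\|_{L^2\cap L^q}\lesssim\|u_\infty\|_{H^4\cap W^{4,q}}$. For $1<p\le 2$ only three derivatives are guaranteed, so one uses \eqref{eq:29} instead, landing on $\|u_\infty\|_{H^{4-2s}\cap W^{4-2s,q}}\le\|u_\infty\|_{H^3\cap W^{3,q}}$ at the price of the weaker power of $c$. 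Your first step invokes Lemma~\ref{lem:1.2} ``directly'' to conclude $\|(P_\infty-P_c)u_\infty\|_{L^q}\lesssim c^{-\frac{2s}{1-s}}\|\Delta^2 u_\infty\|_{L^q}$, but Lemma~\ref{lem:1.2} is only a pointwise symbol estimate; for $q\neq 2$ you must go through H\"ormander--Mikhlin (i.e.\ through Theorem~\ref{th:1.5}), which is precisely what the paper does. And this step still consumes four derivatives of $u_\infty$, which you justify by asserting ``$u_\infty$ is smooth with exponential decay''; that claim is defensible but is exactly what the paper avoids by keeping track of $\lfloor p\rfloor$. Your second argument (rewriting via $P_\infty u_\infty=u_\infty^p$ and applying \eqref{eq:29}) is essentially the paper's $1<p\le 2$ computation in disguise, since $u_\infty^p\in W^{2s,q}$ is equivalent to $u_\infty\in W^{2+2s,q}$ through the equation. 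So the ingredients are all there, but you must reorder the logic: the sharper estimate \eqref{eq:28} is reserved for $p>2$ where the regularity supports it, and \eqref{eq:29} is the substitute when it does not.
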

\begin{proof}
It is well known that $u_\infty \in H_{\mathrm{rad}}^{2+\lfloor p \rfloor} \cap W_{\mathrm{rad}}^{2+\lfloor p \rfloor,q}$, where $\lfloor p \rfloor$ is the largest integer less than or equal to $p$. If $p>2$, by Proposition \ref{prop:2.1}, \eqref{eq:28} and Theorem \ref{th:1.6} we deduce that
\begin{multline*}
\left\|\mathcal{R}_c \right\|_{H^1_{\mathrm{rad}} \cap W^{1,q}} \\
\leq \left\| \mathcal{L}_{c,\infty}^{-1} \right\|_{L(L_{\mathrm{rad}}^2 \cap L^q,H_{\mathrm{rad}}^1 \cap W^{1,q})} \left\| \frac{P_\infty(D)-P_c(D)}{P_\infty(D) P_c(D)} \right\|_{L(L_{\mathrm{rad}}^2 \cap L^q)} \left\| P_\infty(D) P_c(D) u_\infty \right\|_{L_{\mathrm{rad}}^2 \cap L^q} \\
\lesssim c^{-\frac{2s}{1-s}} \left\| P_\infty(D) u_\infty \right\|_{H_{\mathrm{rad}}^2 \cap W_{\mathrm{rad}}^{2,q}} \lesssim c^{-\frac{2s}{1-s}} \|u_\infty\|_{H_{\mathrm{rad}}^4 \cap W_{\mathrm{rad}}^{4,q}}.
\end{multline*}
Analogously, if $1<p<2$, again by Proposition \ref{prop:2.1}, \eqref{eq:29} and Theorem \ref{th:1.6} we deduce that
\begin{multline*}
\left\|\mathcal{R}_c \right\|_{H^1_{\mathrm{rad}} \cap W^{1,q}} \\
\leq \left\| \mathcal{L}_{c,\infty}^{-1} \right\|_{L(L_{\mathrm{rad}}^2 \cap L^q,H_{\mathrm{rad}}^1 \cap W^{1,q})} \left\| \frac{P_\infty(D)-P_c(D)}{P_\infty(D)^{1-s} P_c(D)} \right\|_{L(L_{\mathrm{rad}}^2 \cap L^q)} \left\| P_\infty(D)^{1-s} P_c(D) u_\infty \right\|_{L_{\mathrm{rad}}^2 \cap L^q} \\
\lesssim c^{-\frac{2s^2}{1-s}} \left\| P_\infty(D)^{1-s} u_\infty \right\|_{H_{\mathrm{rad}}^2 \cap W_{\mathrm{rad}}^{2,q}} \lesssim c^{-\frac{2s^2}{1-s}} \|u_\infty\|_{H_{\mathrm{rad}}^3 \cap W_{\mathrm{rad}}^{3,q}},
\end{multline*}
since $1/2<s<1$.
\end{proof}
We turn to the estimate of the second term in the formula of $\Phi_c$.
\begin{lemma} \label{lem:2.5}
Fix $q>N$ and suppose that $0<\delta \leq \|u_\infty\|_{H^1}$. Then for $c \geq c_0$ we have
\begin{align}
\left\| \mathcal{L}_{c,\infty}^{-1} Q(w) \right\|_{H_{\mathrm{rad}}^1 \cap W^{1,q}} &\lesssim \delta^{\min \{p,2 \}} \label{eq:33} \\
\left\| \mathcal{L}_{c,\infty}^{-1} Q(w) - \mathcal{L}_{c,\infty}^{-1} Q(\tilde{w}) \right\|_{H_{\mathrm{rad}}^1 \cap W^{1,q}}  &\lesssim \delta^{\min \{p-1,1\}} \|w-\tilde{w}\|_{H_{\mathrm{rad}}^1 \cap W^{1,q}}. \label{eq:34}
\end{align}
\end{lemma}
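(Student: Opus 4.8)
The plan is to deduce both estimates from the $c$-uniform invertibility of $\mathcal{L}_{c,\infty}$ already established in Proposition~\ref{prop:2.1}: since $\mathcal{L}_{c,\infty}^{-1}\colon L^2_{\mathrm{rad}}\cap L^q\to H^1_{\mathrm{rad}}\cap W^{1,q}$ has operator norm bounded independently of $c\ge c_0$, it suffices to control $\|Q(w)\|_{L^2\cap L^q}$ and $\|Q(w)-Q(\tilde w)\|_{L^2\cap L^q}$, the point being that $Q$ is exactly the second-order Taylor remainder of $g(t)=|t|^{p-1}t$ at $u_\infty$. Throughout, $w,\tilde w$ are taken in the ball $\{\|\cdot\|_{H^1_{\mathrm{rad}}\cap W^{1,q}}\le\delta\}$; since $q>N$, Morrey's embedding gives $\|w\|_{L^\infty}\lesssim\|w\|_{W^{1,q}}\le\delta$, and $\delta\le\|u_\infty\|_{H^1}$ is a fixed $c$-independent bound. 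Note $Q(w)$ is radial whenever $w$ is, so $\mathcal{L}_{c,\infty}^{-1}Q(w)\in H^1_{\mathrm{rad}}\cap W^{1,q}$ makes sense.

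The first step is a pointwise estimate. Writing $g'(t)=p|t|^{p-1}$ and using $u_\infty>0$, so $g'(u_\infty)=pu_\infty^{p-1}$,
\[
|Q(w)|=\bigl|g(u_\infty+w)-g(u_\infty)-g'(u_\infty)w\bigr|\lesssim
\begin{cases}
|w|^p,& 1<p<2,\\[2pt]
|w|^2\bigl(u_\infty^{p-2}+|w|^{p-2}\bigr),& p\ge 2.
\end{cases}
\]
This follows from $|Q(w)|\le|w|\,\sup_{|\tau|\le|w|}|g'(u_\infty+\tau)-g'(u_\infty)|$ together with the $(p-1)$-H\"older continuity of $t\mapsto|t|^{p-1}$ when $p<2$, resp. the bound $|g'(a+h)-g'(a)|\lesssim|h|(|a|+|h|)^{p-2}$ when $p\ge2$. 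In the same way,
\[
Q(w)-Q(\tilde w)=(w-\tilde w)\int_0^1\Bigl(g'\bigl(u_\infty+\tilde w+\theta(w-\tilde w)\bigr)-g'(u_\infty)\Bigr)\,d\theta ,
\]
and since $|\tilde w+\theta(w-\tilde w)|\lesssim\delta$ the integrand is $\lesssim\delta^{p-1}$ for $1<p<2$ and $\lesssim\delta$ for $p\ge2$ (here using $u_\infty\in L^\infty$), giving $|Q(w)-Q(\tilde w)|\lesssim\delta^{\min\{p-1,1\}}|w-\tilde w|$ pointwise.

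Next I would pass to $L^r$ norms, $r\in\{2,q\}$, by H\"older's inequality, factoring out $\|w\|_{L^\infty}$ and $\|u_\infty\|_{L^\infty}$. For $1<p<2$, $\bigl\||w|^p\bigr\|_{L^r}=\|w\|_{L^{pr}}^p\le\|w\|_{L^\infty}^{p-1}\|w\|_{L^r}\lesssim\delta^p$; for $p\ge2$, $\bigl\||w|^2u_\infty^{p-2}\bigr\|_{L^r}\le\|u_\infty\|_{L^\infty}^{p-2}\|w\|_{L^\infty}\|w\|_{L^r}\lesssim\delta^2$ and $\bigl\||w|^p\bigr\|_{L^r}\lesssim\delta^p\le\|u_\infty\|_{H^1}^{p-2}\delta^2$. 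Hence $\|Q(w)\|_{L^2\cap L^q}\lesssim\delta^{\min\{p,2\}}$, and applying the bounded operator $\mathcal{L}_{c,\infty}^{-1}$ of Proposition~\ref{prop:2.1} gives \eqref{eq:33}. For \eqref{eq:34}, the pointwise bound from the first step yields $\|Q(w)-Q(\tilde w)\|_{L^2\cap L^q}\lesssim\delta^{\min\{p-1,1\}}\|w-\tilde w\|_{L^2\cap L^q}\le\delta^{\min\{p-1,1\}}\|w-\tilde w\|_{H^1_{\mathrm{rad}}\cap W^{1,q}}$, and applying $\mathcal{L}_{c,\infty}^{-1}$ again concludes.

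The only genuinely delicate point is the pointwise algebra for $g$ near the origin, where the cases $p<2$ and $p\ge2$ must be separated and where, for $p\ge2$, one must observe that $u_\infty^{p-2}$ is bounded; these are classical inequalities. All the real analytic difficulty---the $c$-uniform inversion of $\mathcal{L}_{c,\infty}$ via H\"older--Mikhlin-type multiplier estimates---is already behind us, so beyond this bookkeeping there is little to do.
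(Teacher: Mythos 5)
Your proposal is correct and follows essentially the same route as the paper: a pointwise Taylor-remainder estimate on $Q(w)-Q(\tilde w)$ via the fundamental theorem of calculus, split into the cases $1<p\le 2$ (H\"older continuity of $t\mapsto|t|^{p-1}$) and $p>2$ (Lipschitz-type bound involving $u_\infty^{p-2}$), followed by the embedding $W^{1,q}\subset L^\infty$ for $q>N$ and the $c$-uniform bound on $\mathcal{L}_{c,\infty}^{-1}$ from Proposition~\ref{prop:2.1}. The only cosmetic difference is that you prove \eqref{eq:33} directly, whereas the paper obtains it from \eqref{eq:34} by taking $\tilde w=0$.
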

\begin{proof}
Clearly \eqref{eq:33} follows from \eqref{eq:34} by choosing $\tilde{w}=0$. Hence we focus on the second estimate. By definition of $Q$, we write
\begin{multline*}
Q(w) - Q(\tilde{w}) = \left( |u_\infty + w|^{p-1}(u_\infty+w) - |u_\infty +\tilde{w}|^{p-1} (u_\infty+\tilde{w}) \right) - p u_\infty^{p-1} (w-\tilde{w}) \\
= \int_0^1 \frac{d}{dt} \left( |u_\infty + (1-t) \tilde{w}+tw|^{p-1} \left( u_\infty + (1-t) \tilde{w} + tw \right) \right)\, dt - p u_\infty^{p-1} (w-\tilde{w}) \\
= p \int_0^1 \left( |u_\infty + (1-t) \tilde{w} + tw |^{p-1} \right) (w-\tilde{w}) \, dt.
\end{multline*}
If $1<p \leq 2$, we conclude that $|Q(w)-Q(\tilde{w})| \leq C (|w|+|\tilde{w}|)^{p-1} |w-\tilde{w}|$ by the elementary inequality $||a|^\ell - |b|^\ell| \leq ||a|-|b||^\ell \leq |a-b|^\ell$ for $0<\ell <1$. By Proposition \ref{prop:2.1} and the Sobolev embedding $W^{1,q} \subset L^\infty$,
\begin{multline*}
\left\| \mathcal{L}_{c,\infty}^{-1} Q(w) - \mathcal{L}_{c,\infty}^{-1} Q(\tilde{w}) \right\|_{H_{\mathrm{rad}}^1 \cap W^{1,q}}  \leq C \left\| Q(w)-Q(\tilde{w}) \right\|_{L_{\mathrm{rad}}^2 \cap L^q} \\
\leq C \left( \|w\|_{H_{\mathrm{rad}}^1 \cap W^{1,q}} + \| \tilde{w} \|_{H_{\mathrm{rad}}^1 \cap W^{1,q}} \right)^{p-1} \|w-\tilde{w}\|_{H_{\mathrm{rad}}^1 \cap W^{1,q}}.
\end{multline*}
If $p>2$, we proceed as before and  conclude that
\begin{equation*}
|Q(w)-Q(\tilde{w})| \leq C \left( u_\infty + |w| + |\tilde{w}| \right)^{p-2} (|w|+|\tilde{w}|) |w-\tilde{w}|.
\end{equation*}
This yields as above
\begin{multline*}
\left\| \mathcal{L}_{c,\infty}^{-1} Q(w) - \mathcal{L}_{c,\infty}^{-1} Q(\tilde{w}) \right\|_{H_{\mathrm{rad}}^1 \cap W^{1,q}}  \\
\leq C \left\| Q(w)-Q(\tilde{w}) \right\|_{L_{\mathrm{rad}}^2 \cap L^q} 
\leq C \left\| (u_\infty + |w| +|\tilde{w}|)^{p-2} (|w|+|\tilde{w}|)|w-\tilde{w}| \right\|_{L_{\mathrm{rad}}^2 \cap L^q} \\
\leq C \left(
\|u_\infty\|_{H_{\mathrm{rad}}^1 \cap W^{1,q}} + \|w\|_{H_{\mathrm{rad}}^1 \cap W^{1,q}} + \| \tilde{w} \|_{H_{\mathrm{rad}}^1 \cap W^{1,q}}
\right)^{p-2} \cdot \\
\cdot \left(
\|w\|_{H_{\mathrm{rad}}^1 \cap W^{1,q}} + \| \tilde{w} \|_{H_{\mathrm{rad}}^1 \cap W^{1,q}}
\right) \|w-\tilde{w}\|_{H_{\mathrm{rad}}^1 \cap W^{1,q}}.
\end{multline*}
The proof is complete.
\end{proof}
\begin{proposition} \label{prop:3.8}
Let $q>N$. For any $\delta>0$ sufficiently small, there exists $c_0>0$ such that, if $c \geq c_0$, then $\Phi_c$ has a unique fixed point in the (closed) ball 
\begin{equation*}
B_\delta = \left\{ w \in H_{\mathrm{rad}}^1 \cap W^{1,q} \mid \|w\|_{H_{\mathrm{rad}}^1 \cap W^{1,q}} \leq \delta \right\}.
\end{equation*}
\end{proposition}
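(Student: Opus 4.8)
The plan is to recognize that \eqref{eq:30} makes $\Phi_c$ a self-map of the closed ball $B_\delta$ which is moreover a strict contraction, so that the conclusion is simply the Banach fixed point theorem. All the hard analysis has already been carried out in Lemmas \ref{lem:2.4} and \ref{lem:2.5} together with Proposition \ref{prop:2.1}; the only thing to get right is the order in which the parameters $\delta$ and $c_0$ are chosen.

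First I would fix $\delta$. Writing $\Phi_c(w)=\mathcal{R}_c+\mathcal{L}_{c,\infty}^{-1}Q(w)$, Lemma \ref{lem:2.5} gives constants $C_1,C_2>0$ \emph{independent of $c$} such that, for every $0<\delta\le\|u_\infty\|_{H^1}$ and every $w,\tilde w\in B_\delta$ and $c\ge c_0$,
\begin{equation*}
\left\|\mathcal{L}_{c,\infty}^{-1}Q(w)\right\|_{H_{\mathrm{rad}}^1\cap W^{1,q}}\le C_1\delta^{\min\{p,2\}},\qquad
\left\|\mathcal{L}_{c,\infty}^{-1}Q(w)-\mathcal{L}_{c,\infty}^{-1}Q(\tilde w)\right\|_{H_{\mathrm{rad}}^1\cap W^{1,q}}\le C_2\delta^{\min\{p-1,1\}}\|w-\tilde w\|_{H_{\mathrm{rad}}^1\cap W^{1,q}}.
\end{equation*}
Since $p>1$, both $\min\{p,2\}-1$ and $\min\{p-1,1\}$ are strictly positive, so I can choose $\delta\in(0,\|u_\infty\|_{H^1}]$ small enough that $C_1\delta^{\min\{p,2\}-1}\le \tfrac12$ and $C_2\delta^{\min\{p-1,1\}}\le\tfrac12$.

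With $\delta$ now frozen, I would choose $c_0=c_0(\delta)$. By Lemma \ref{lem:2.4}, $\|\mathcal{R}_c\|_{H_{\mathrm{rad}}^1\cap W^{1,q}}\to 0$ as $c\to+\infty$ (at rate $c^{-2s^2/(1-s)}$ or $c^{-2s/(1-s)}$ according to whether $p\le 2$ or $p>2$), so after possibly enlarging the $c_0$ of Lemma \ref{lem:2.5} we may also assume $\|\mathcal{R}_c\|_{H_{\mathrm{rad}}^1\cap W^{1,q}}\le\delta/2$ for all $c\ge c_0$. Then for $w\in B_\delta$,
\begin{equation*}
\|\Phi_c(w)\|_{H_{\mathrm{rad}}^1\cap W^{1,q}}\le \|\mathcal{R}_c\|_{H_{\mathrm{rad}}^1\cap W^{1,q}}+C_1\delta^{\min\{p,2\}}\le \frac{\delta}{2}+\frac{\delta}{2}=\delta,
\end{equation*}
so $\Phi_c(B_\delta)\subset B_\delta$; and since $\mathcal{R}_c$ cancels in a difference, for $w,\tilde w\in B_\delta$,
\begin{equation*}
\|\Phi_c(w)-\Phi_c(\tilde w)\|_{H_{\mathrm{rad}}^1\cap W^{1,q}}=\left\|\mathcal{L}_{c,\infty}^{-1}Q(w)-\mathcal{L}_{c,\infty}^{-1}Q(\tilde w)\right\|_{H_{\mathrm{rad}}^1\cap W^{1,q}}\le \frac{1}{2}\|w-\tilde w\|_{H_{\mathrm{rad}}^1\cap W^{1,q}}.
\end{equation*}

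Finally, $B_\delta$ is a closed ball in the Banach space $H_{\mathrm{rad}}^1\cap W^{1,q}$, hence a complete metric space, and Proposition \ref{prop:2.1} ensures $\Phi_c$ is well defined and stays within the radial class; the Banach fixed point theorem then produces a unique $w\in B_\delta$ with $\Phi_c(w)=w$, which is exactly the claim. I do not anticipate a real obstacle here, since the quantitative estimates are already available; the one point demanding care is precisely the quantifier order — $\delta$ must be picked first, small enough to make \emph{both} the self-mapping constant and the Lipschitz constant at most $\tfrac12$ and no larger than $\|u_\infty\|_{H^1}$ so that Lemma \ref{lem:2.5} applies, and only afterwards $c_0=c_0(\delta)$ is taken large enough to absorb the residual $\mathcal{R}_c$ into the remaining half of the budget.
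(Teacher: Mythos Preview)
Your proof is correct and follows essentially the same route as the paper: fix $\delta$ small enough that Lemma~\ref{lem:2.5} forces both the self-mapping bound $C_1\delta^{\min\{p,2\}}\le\delta/2$ and the Lipschitz constant $C_2\delta^{\min\{p-1,1\}}\le 1/2$, then take $c_0$ large via Lemma~\ref{lem:2.4} so that $\|\mathcal{R}_c\|\le\delta/2$, and conclude by Banach's contraction principle. Your write-up is in fact slightly more careful than the paper's own proof in making the quantifier order explicit and in recalling the constraint $\delta\le\|u_\infty\|_{H^1}$ required by Lemma~\ref{lem:2.5}.
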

\begin{proof}
By Lemma \ref{lem:2.4} we choose $c_0$ so large that $\|\mathcal{R}_c\|_{H_{\mathrm{rad}}^1 \cap W^{1,q}} \leq \delta/2$ for $c \geq c_0$. By Lemma \ref{lem:2.5}, $w$, $\tilde{w} \in B_\delta$ implies $\|\Phi_c(w)\|_{H_{\mathrm{rad}}^1\cap W^{1,q}} \leq \delta/2 + C \delta^{\min \{p,2\}} \leq \delta$ and 
\begin{equation*}
\left\| \Phi_c(w)-\Phi_c(\tilde{s}) \right\|_{H_{\mathrm{rad}}^1 \cap W^{1,q}} \leq C \delta^{\min \{p-1,1\}} \|w-\tilde{w}\|_{H_{\mathrm{rad}}^1 \cap W^{1,q}} \leq \frac{1}{2} \|w-\tilde{w}\|_{H_{\mathrm{rad}}^1 \cap W^{1,q}}
\end{equation*}
as soon as $C^{\min \{p-1,1\}} \delta \leq 1/2$. A application of the Contraction Theorem (see \cite{Banach22}) yields the result.
\end{proof}
\begin{lemma} \label{lem:3.9}
For $w$ the fixed point $w$ constructed in Proposition \ref{prop:3.8}, the function $u=u_\infty+w$  is a solution of the equation $P_c(D)u=|u|^{p-1}u$.
\end{lemma}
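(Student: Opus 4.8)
The plan is simply to reverse, step by step, the chain of identities that produced the operator $\Phi_c$ in \eqref{eq:30}, checking that every manipulation is legitimate at the level of honest functions. Let $w$ be the fixed point given by Proposition \ref{prop:3.8} and set
\begin{equation*}
g = \left( P_\infty(D) - P_c(D) \right) u_\infty + Q(w),
\end{equation*}
so that $w = \Phi_c(w) = \mathcal{L}_{c,\infty}^{-1} g$. First I would record that $g \in L^2_{\mathrm{rad}} \cap L^q$: the linear contribution $\left( P_\infty(D) - P_c(D) \right) u_\infty$ is controlled exactly as in the proof of Lemma \ref{lem:2.4}, using Lemma \ref{lem:1.2} together with the radiality, smoothness and exponential decay of $u_\infty$ (Remark \ref{rem:2.3}), while $Q(w) = |u_\infty + w|^{p-1}(u_\infty + w) - u_\infty^p - p u_\infty^{p-1} w$ lies in $L^2_{\mathrm{rad}} \cap L^q$ because $u_\infty, w \in L^\infty$ (here $q > N$) and $u_\infty, w \in L^2 \cap L^q$, as in the proof of Lemma \ref{lem:2.5}.

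Next, the explicit formula for $\mathcal{L}_{c,\infty}^{-1}$ found in the proof of Proposition \ref{prop:2.1} exhibits $w = \mathcal{L}_{c,\infty}^{-1} g$ as $P_c(D)^{-1}$ applied to an element of $L^2_{\mathrm{rad}} \cap L^q$; hence $P_c(D) w$ is a genuine $L^2_{\mathrm{rad}} \cap L^q$ function, and the factorisation \eqref{eq:32} allows me to apply $\mathcal{L}_{c,\infty}$ to $w = \mathcal{L}_{c,\infty}^{-1} g$ --- legitimate precisely because $w$ lies in the range of $\mathcal{L}_{c,\infty}^{-1}$, on which $\mathcal{L}_{c,\infty}$ acts as the genuine inverse --- obtaining
\begin{equation*}
\mathcal{L}_{c,\infty} w = g = \left( P_\infty(D) - P_c(D) \right) u_\infty + Q(w).
\end{equation*}
Substituting $\mathcal{L}_{c,\infty} = P_c(D) - p u_\infty^{p-1}$ and the definition of $Q$, the two terms $p u_\infty^{p-1} w$ cancel and there remains
\begin{equation*}
P_c(D) w = \left( P_\infty(D) - P_c(D) \right) u_\infty + |u_\infty + w|^{p-1}(u_\infty + w) - u_\infty^p .
\end{equation*}
Adding $P_c(D) u_\infty$ to both sides and recalling that the non-relativistic ground state solves $P_\infty(D) u_\infty = -\Delta u_\infty + u_\infty = u_\infty^p = |u_\infty|^{p-1} u_\infty$ (Remark \ref{rem:2.3}), the terms $P_\infty(D) u_\infty$ and $u_\infty^p$ cancel and one is left with
\begin{equation*}
P_c(D)(u_\infty + w) = |u_\infty + w|^{p-1}(u_\infty + w),
\end{equation*}
that is, $u = u_\infty + w$ solves $P_c(D) u = |u|^{p-1} u$, which is \eqref{eq:36}. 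Since $u_\infty \in H^1_{\mathrm{rad}} \cap L^\infty$ and $w \in H^1_{\mathrm{rad}} \cap W^{1,q} \subset H^1_{\mathrm{rad}} \cap L^\infty$, the solution $u$ belongs to $H^1_{\mathrm{rad}} \cap L^\infty$, as needed for Theorem \ref{th:main}.

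I expect this lemma to be essentially a bookkeeping argument; the one point that is not completely formal is the passage from the fixed-point identity, which a priori only lives in $H^1_{\mathrm{rad}} \cap W^{1,q}$, to the pointwise equation --- namely the verification that $g \in L^2_{\mathrm{rad}} \cap L^q$ and that, because $w = \mathcal{L}_{c,\infty}^{-1} g$ is $P_c(D)^{-1}$ of such a function, $P_c(D) w$ is again an $L^2_{\mathrm{rad}} \cap L^q$ function and not merely a tempered distribution. Once that is secured, $P_c(D) u = |u|^{p-1} u$ holds as an equality of functions (pointwise almost everywhere, hence in $\mathcal{S}'$), every intermediate quantity in the computation is an honest function, and no further obstacle remains.
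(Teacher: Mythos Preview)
Your proof is correct and follows essentially the same approach as the paper's: apply $\mathcal{L}_{c,\infty}$ to the fixed-point identity, substitute the definitions of $\mathcal{L}_{c,\infty}$ and $Q$, and cancel terms using $P_\infty(D)u_\infty = u_\infty^p$. You add more functional-analytic justification (verifying $g \in L^2_{\mathrm{rad}} \cap L^q$ and that $P_c(D)w$ is an honest function) than the paper, which simply writes the algebraic chain directly, but the underlying argument is identical.
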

\begin{proof}
Indeed, we alrady know that $w = \mathcal{R}_c + (\mathcal{L}_{c,\infty})^{-1} Q(w)$ in $H_{\mathrm{rad}}^1 \cap W^{1,q}$, and thus
\begin{multline*}
0 = \mathcal{L}_{c,\infty} w - \mathcal{L}_{c,\infty} \mathcal{R}_c - Q(w) \\
= \left( P_c(D)-p u_\infty^{p-1} \right)w - \left( P_\infty(D) - P_c(D) \right) u_\infty - \left( |u|^{p-1}u - u_\infty^p - p u_\infty^{p-1}w \right) \\
= P_c(D)w - pu_\infty^{p-1} w - P_\infty(D) u_\infty + P_c(D) u_\infty - |u|^{p-1}u + u_\infty^p + p u_\infty^{p-1} w 
= P_c(D) u - |u|^{p-1}u.
\end{multline*}
\end{proof}
The proof of Theorem \ref{th:main} follows immediately from the previous Lemma and a rescaling, see (\ref{eq:36}).
\begin{corollary}
Let $u_c$ be a solution of (\ref{eq:1}) in $H_{\mathrm{rad}}^1 \cap L^\infty$ that converges to $u_\infty$ as $c \to +\infty$. Then for sufficiently large $c \geq 1$, $u_c$ is unique and moreover
\begin{equation} \label{eq:35}
\left\| u_c - u_\infty \right\|_{H^1 \cap W^{1,q}} =
\begin{cases}
O(c^{-\frac{2s^2}{1-s}}) &\text{if $1<p \leq 2$} \\
O(c^{-\frac{2s}{1-s}}) &\text{if $p >2$}.
\end{cases}
\end{equation}
\end{corollary}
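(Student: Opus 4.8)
The plan is to prove that the hypotheses force $w_c := u_c - u_\infty$ to be, for $c$ large enough, the fixed point of $\Phi_c$ in the ball $B_\delta$ produced by Proposition~\ref{prop:3.8}; uniqueness and the convergence rate will then be read off from the contraction estimates. Throughout I would work with the normalised equation~\eqref{eq:36}, $P_c(D)u = |u|^{p-1}u$: the change of variables used to derive~\eqref{eq:36} is a fixed dilation with multiplicative constants independent of $c$, under which the ``light speed'' is rescaled linearly, so every $O(\cdot)$ bound in $c$ transfers verbatim between~\eqref{eq:1} and~\eqref{eq:36}.

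The first --- and, I expect, the only delicate --- step is a bootstrap upgrading the assumed convergence to convergence in $H^1_{\mathrm{rad}} \cap W^{1,q}$. Starting from $P_c(D)w_c = \bigl(|u_c|^{p-1}u_c - |u_\infty|^{p-1}u_\infty\bigr) + \bigl(P_c(D) - P_\infty(D)\bigr)u_\infty$, which follows from $P_c(D)u_c = |u_c|^{p-1}u_c$ and $P_\infty(D)u_\infty = |u_\infty|^{p-1}u_\infty$, I would apply Theorem~\ref{th:1.6} to get $\|w_c\|_{H^1 \cap W^{1,q}} \lesssim \|P_c(D)w_c\|_{L^2 \cap L^q}$. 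Since $u_c$ and $u_\infty$ are bounded in $L^\infty$, the nonlinear term is controlled by a constant times $\|w_c\|_{L^2 \cap L^q}$, which tends to $0$ by interpolating the assumed convergence against the uniform $L^\infty$ bound; the linear term tends to $0$ by Theorem~\ref{th:1.5}, because $u_\infty$ is smooth with exponential decay. Hence $\|w_c\|_{H^1_{\mathrm{rad}} \cap W^{1,q}} \to 0$.

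Next I would identify $w_c$ with the fixed point. For $c \geq c_0$ the operator $\mathcal{L}_{c,\infty}$ is invertible on $H^1_{\mathrm{rad}} \cap W^{1,q}$ by Proposition~\ref{prop:2.1}; running the algebra of Lemma~\ref{lem:3.9} backwards, the identity $P_c(D)u_c = |u_c|^{p-1}u_c$ is equivalent to $\mathcal{L}_{c,\infty}w_c = \bigl(P_\infty(D) - P_c(D)\bigr)u_\infty + Q(w_c)$, hence to $w_c = \Phi_c(w_c)$ with $\Phi_c$ as in~\eqref{eq:30}. By the bootstrap, $w_c \in B_\delta$ for $c$ large, so the uniqueness in Proposition~\ref{prop:3.8} forces $w_c$ to coincide with the fixed point constructed there. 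This already yields that $u_c$ is unique for $c$ large, together with the representation $w_c = \mathcal{R}_c + \mathcal{L}_{c,\infty}^{-1}Q(w_c)$.

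Finally, the rate. From this representation, Lemma~\ref{lem:2.4} and Lemma~\ref{lem:2.5} (applied with $\delta = \|w_c\|_{H^1_{\mathrm{rad}} \cap W^{1,q}}$, which is $\leq \|u_\infty\|_{H^1}$ for $c$ large) give, in the $H^1_{\mathrm{rad}} \cap W^{1,q}$ norm, $\|w_c\| \leq \|\mathcal{R}_c\| + C\|w_c\|^{\min\{p,2\}}$. Since $\min\{p,2\} > 1$ and $\|w_c\| \to 0$, the last term is absorbed into the left-hand side for $c$ large, so $\|w_c\| \leq 2\|\mathcal{R}_c\|$, and Lemma~\ref{lem:2.4} then gives $O(c^{-2s^2/(1-s)})$ when $1 < p \leq 2$ and $O(c^{-2s/(1-s)})$ when $p > 2$. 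Undoing the rescaling completes the proof; apart from the bootstrap, every step is a direct invocation of the results already established, so the main work is concentrated in turning the a priori convergence hypothesis into convergence in the $H^1_{\mathrm{rad}} \cap W^{1,q}$ topology.
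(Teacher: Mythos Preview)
Your approach is correct and follows the same overall strategy as the paper: identify $w_c = u_c - u_\infty$ as a fixed point of $\Phi_c$, invoke the uniqueness from Proposition~\ref{prop:3.8}, and read off the rate from $\|\mathcal{R}_c\|$. Two points of comparison are worth noting.

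First, your bootstrap paragraph is actually \emph{stronger} than what the paper offers. The paper asserts that the norms $\|\cdot\|_{H^1\cap L^\infty}$ and $\|\cdot\|_{H^1\cap W^{1,q}}$ are equivalent for $q>N$; this is false as stated (Sobolev only gives $H^1\cap W^{1,q}\hookrightarrow H^1\cap L^\infty$, not the reverse), and something like your regularity argument via Theorem~\ref{th:1.6} is needed to pass from $\|w_c\|_{H^1\cap L^\infty}\to 0$ to $\|w_c\|_{H^1\cap W^{1,q}}\to 0$. So your ``only delicate step'' genuinely fills a gap. (Minor: in your displayed identity for $P_c(D)w_c$ the sign of the last term should be $+(P_\infty(D)-P_c(D))u_\infty$, but only norms are used afterwards so this is harmless.)

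Second, for the rate the paper argues differently: rather than your absorption inequality $\|w_c\|\leq \|\mathcal{R}_c\|+C\|w_c\|^{\min\{p,2\}}$, it re-runs the contraction argument on a ball of radius $\sim c^{-\alpha}$ (with $\alpha=\tfrac{2s^2}{1-s}$ or $\tfrac{2s}{1-s}$), obtains a fixed point there, and concludes by uniqueness that it coincides with $w_c$. Both routes are standard and yield the same bound; yours is marginally quicker.
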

\begin{proof}
First of all, we claim that there exists $\delta>0$ so small that the solution to \eqref{eq:1} is unique in $B_\delta(u_\infty) = \left\{ u \in H_{\mathrm{rad}}^1 \cap L^\infty \mid \|u-u_\infty \|_{H^1 \cap L^\infty} <\delta \right\}$. 

Indeed, by Lemma \ref{lem:3.9}, $w=u_c-u_\infty$ is a fixed point of $\Phi_c$. Since we are assuming that $q>N$, the norms $\|\cdot \|_{H^1 \cap L^\infty}$ and $\| \cdot \|_{H^1 \cap W^{1,q}}$ are equivalent. The claim follows from the uniqueness of the fixed point of $\Phi_c$ in a small ball around zero.

Furthermore, by Lemma \ref{lem:2.4} we can take $\delta \sim c^{-\alpha}$ such that $\|\mathcal{R}_c \|_{H_{\mathrm{rad}}^1 \cap W^{1,q}} \leq \delta$, where $\alpha$ is either equal to $2s^2/(1-s)$ or $2s/(1-s)$. As before, we can prove that $\Phi_c$ is a contraction in a (closed) ball of radius $\sim c^{-\alpha}$, so that it admits a fixed point there. By uniqueness, taking $c$ larger if needed, this fixed point must be equal to the fixed point $w$ already constructed. Therefore the difference $u_c - u_\infty$ must satisfy \eqref{eq:35}.
\end{proof}

\end{document}